\tikzstyle{vertex}=[circle, draw, inner sep=0pt, minimum size=6pt]
\newcommand{\vertex}{\node[vertex]}
\newtheorem{theorem}{Theorem}
\theoremstyle{plain}
\newtheorem{corollary}{Corollary}
\newtheorem{definition}{Definition}
\newtheorem{example}{Example}
\newtheorem{lemma}{Lemma}
\newtheorem{remark}{Remark}
\numberwithin{equation}{section}
\begin{document}

\title[Graphs with minimal well-covered dimension]{Graphs with minimal well-covered dimension}
\author{Gabriella Clemente}
\email{gclemen00@citymail.cuny.edu}
\address{The City College of New York, 166 Convent Avenue, New York, NY 10031}
\subjclass[2010]{Primary 05C69; Secondary 05C50}
\keywords{Graphs, Cliques, Well-covered dimension, Maximal independent sets} 
\dedicatory{To my father, L. Clemente, my mother, V. Alonso, and to all women of mathematics.}

\begin{abstract}
There is a class of graphs with well-covered dimension equal to the simplicial clique number that contains all chordal graphs and infinitely many other graphs. These graphs generalize a result by Brown and Nowakowski on the well-covered dimension of chordal graphs. Furthermore, each member of the infinite family of Sierpinski gasket graphs of order at least $2$ has well-covered dimension $3,$ the simplicial clique number. 
\end{abstract}

\maketitle

\section{Introduction} 

We start by giving a few graph theoretical definitions, leading up to the concepts of well-covered weighting, space and dimension, which are the main object of study in this paper. We refer the reader to \cite{DW} for any notions we use but do not define, and for the more detailed treatment of related ideas. 

Throughout, we assume that graphs are simple, connected, and undirected. A graph is a set of vertices $V(G)$ and edges $E(G)$ with specified connectivity relations. We usually write $G=(V(G),E(G))$ for a graph and $vw$ for the edge connecting the vertices $v$ and $w$. Also, we say that the order of $G$ is $|V(G)|$ and that two vertices $v$ and $w$ are adjacent if there is an edge between them.  

Let $I \subseteq V(G)$. The neighborhood of $I$, denoted by $N(I)$, is the set of all vertices that are adjacent to any vertex in $I$. The closed  neighborhood of $I$ is $N[I]=N(I)\cup I$. When $I=\{v\}$, we write $N(v)$ and $N[v]$. The degree of a vertex $v$ is $|N(v)|$. A complete graph of order $n$, $K_n$, is such that the degree of each of its vertices is $n-1$.

A subgraph of a graph $G=(V_1,E_1)$ is a graph $H=(V_2,E_2)$ with $V_2 \subseteq V_1$ and $E_2 \subseteq E_1$. We say that $X \subseteq V(G)$ is a clique of $G$ if $X$ induces the subgraph $K_{|X|}$. A clique of $G$ is maximal if it is not properly contained in any other clique of $G$.  A graph is chordal if it has no induced cycles of length at least $4$. A more thorough introduction to these ideas can be found in \cite{RT}. 

For conciseness, we write $\mathbb{N}_{\leq a}$ for $[1, a]\cap \mathbb{N}$, and $\mathbb{N}_{\geq a}$ for $[a, \infty) \cap \mathbb{N}$.

\vspace{.1in}

Our results rely heavily on the definitions that follow. 

\begin{definition}
Let $G$ be a graph and $L \subset V(G)$. The set $L$ is independent if no two vertices in $L$ are adjacent. If $L$ is not properly contained in any independent set of $G$, then we say that $L$ is a maximal independent set (\emph{MIS}) of $G$. 
\end{definition}

\begin{definition}
Let $\mathbb{F}$ be a field and $G$ be a graph.
\begin{enumerate}
\item A weighting of $G$ is a map $f:V(G) \rightarrow \mathbb{F}$. If a weighting $f$ is such that 
\[
\sum_{v \in \mathcal{M}} f(v)
\]
is constant for every \emph{MIS} $\mathcal{M}$ of $G$, then $f$ is said to be a well-covered weighting of $G$.
\item The vector space (over $\mathbb{F}$) of all well-covered weightings of $G$, which we designate with $\mathcal{V}$, is called the well-covered space of $G$. 
\item The well-covered dimension of $G$ over $\mathbb{F}$ is $wcdim(G,\mathbb{F}) = \dim_{\mathbb{F}}(\mathcal{V})$. 
\end{enumerate}
\end{definition}

The well-covered dimension of any graph is, clearly, at most its order, as remarked in \cite{CY}. 

\begin{remark}
There are graphs whose well-covered dimension depends on the characteristic of the field $\mathbb{F}$ (see \cite{BN} and \cite{BK}). The graphs we study in this article have well-covered dimension independent of field characteristic. Hence, we omit explicit reference to $\mathbb{F}$ and write $wcdim(G)$ instead of $wcdim(G,\mathbb{F})$. 
\end{remark}

\begin{definition}
Let $G$ be a graph. 
\begin{enumerate}
\item $v\in V(G)$ is a simplicial vertex of $G$ if $N[v]$ is a maximal clique. 
\item A clique of $G$ is simplicial if it contains at least one simplicial vertex. 
\item $\mathcal{C}(G)$ is the set of all simplicial cliques of $G$ and $sc(G): = |\mathcal{C}(G)|$. We say that $sc(G)$ is the simplicial clique number of $G$ and we denote the $i$-th member of $\mathcal{C}(G)$ by $C_i$.
\item A clique covering of $G$ is a family of cliques whose union is $V(G)$.
\end{enumerate}
\end{definition} 

\vspace{.1in}

The notions of well-covered weighting and well-covered space of a graph originate from the concept of well-coveredness of a graph. Well-covered graphs were first studied by Plummer (see \cite{P} and \cite{PP}). These graphs have the property that all of their \emph{MIS}s have equal cardinality. The notion of well-coveredness of graphs may be stated in terms of weights of vertices, as noted in \cite{CER}. Let $f:V(G) \rightarrow \mathbb{Z}$ be such that $f(v)=1$, for all $v \in V(G)$. Then, $G$ is well-covered if for every \emph{MIS} $\mathcal{M}$ of $G$, $\sum_{v \in \mathcal{M}} f(v) = |\mathcal{M}|$ is constant. 

It is now possible to ask the following central question: given a graph $G$, what properties need a weighting of $G$, $f$, have in order for $\sum_{v \in \mathcal{M}} f(v)$ to be constant, for any \emph{MIS} $\mathcal{M}$ of $G$? In other words, what do the well-covered weightings of $G$ look like? This problem was first posed in \cite{CER}, where the authors observed that the well-covered space of any graph is non-empty, for the zero-function is a trivial well-covered weighting of any graph. But they also observed that only some graphs have well-covered weightings other than the zero-function. Examples of graphs with unique well-covered weighting the zero-function are cycles of length al least 8, which are studied in detail in \cite{BK}. Thus, a graph that is not well-covered \emph{can be made well-covered in terms of its vertex weights} via a well-covered weighting. 

The notions of well-covered space and well-covered dimension have been studied in the more general setting of hypergraphs $H$, and weightings whose domain need not be $V(H)$ (see \cite{CY}). 

Brown and Nowakowski \cite{BN} proved that for any graph $G$, $wcdim(G)\geq sc(G)$, and that equality holds if $G$ is chordal. Thus, chordal graphs have minimal well-covered dimension. In the following sections, we show that there is a large family of graphs with minimal well-covered dimension (just like chordal graphs). This family contains the class of  chordal graphs properly, and thus generalizes Brown and Nowakowski's result. There are several open questions in the well-covered dimension theory of graphs. Some of these questions are the following:

\begin{enumerate}
\item What exactly does the well-covered dimension of a graph tell us about a graph? 
\item How exactly are clique partitions and coverings, and the well-covered dimension of a graph related? 
\item Find a non-trivial upper bound for the well-covered dimension of any graph; that is, an upper bound other than the order of a graph.
\item Classify all graphs according to their well-covered dimension. Suggestion: start by finding the largest possible class of graphs to which our results apply.
\end{enumerate}

The remainder of this paper is organized as follows. In section $(2),$ we define the class of simplicial clique covered graphs, the \emph{SCCG} class, and prove that any graph in this class has well-covered dimension the number of simplicial cliques. In section $(3),$ we introduce the class of simplicial clique sums of a chordal graph with a \emph{SCCG}, which results in a generalization of our main theorem in section $(1)$ and Brown and Nowakowski's theorem on the well-covered dimension of chordal graphs. In section $(4),$ we prove that all Sierpinski gasket graphs of order at least $2,$ have well-covered dimension $3,$ which happens to be the simplicial clique number. This suggests that the main theorem in section $(3)$ could be generalized further. A full generalization of this theorem would be a big step in answering open question $(4),$ and would advance our overall  understanding of the well-covered dimension theory of graphs.

\section{The well-covered dimension of simplicial clique covered graphs}
 
In this section we investigate a class of graphs that overlaps, but is not identical to, the class of chordal graphs. Our goal is to prove that the well-covered dimension of a graph in this class is equal to its simplicial clique number. From now on, all well-covered weightings are assumed to be non-trivial.

\begin{definition}\label{defSCCG}
A graph $G$ is a simplicial clique covered graph (\emph{SCCG}) if $\mathcal{C}(G) \neq \emptyset$ and $\mathcal{C}(G)$ is a clique covering of $G$.
\end{definition} 

We now we present some technical definitions and notation.

\begin{definition}
Let \(G\) be a \emph{SCCG}. 
\begin{enumerate}
\item The connection set, $\mathcal{W}$, of $G$ is the set of vertices that belong to at least two simplicial cliques of $G$.
\item For an independent set $I_m \subset \mathcal{W},$ $S(I_m)$ is the set of all simplicial cliques that are not contained in the closed neighborhood of $I_m$ and $s_m:=|S(I_m)|.$
\item $\overline{S}(I_m)$ is the complement of $S(I_m)\) in \(\mathcal{C}(G)$.
\item For each $C_i\in \mathcal{C}(G)$, $W_i:=C_i\cap \mathcal{W}.$ 
\end{enumerate}
\end{definition}

\begin{remark}\label{remS_mempty}
From the previous definition, it is evident that
\begin{enumerate}
\item $I_m$ is a \emph{MIS} of $G$ if and only if $S(I_m)=\emptyset$.
\item 
\[
\mathcal{W}=\bigcup_{i=1}^{sc(G)} W_i.
\]
\end{enumerate}
\end{remark}

\hspace{.1in}

Our first result is a classification of the \emph{MIS}s of any \emph{SCCG}. We shall soon see that \emph{MIS}s reveal much about well-covered spaces and their dimension.

\begin{theorem}\label{miscgthm}
Let $G$ be a \emph{SCCG} and $\mathcal{M}$ be a \emph{MIS} of $G$. Then, either
\begin{enumerate}
\item $\mathcal{M}=\{v_1,\hdots,v_{sc(G)}\}$, where each $v_i \in \mathcal{M}$ is a simplicial vertex in a distinct $C_i \in \mathcal{C}(G)$, or  
\item $\mathcal{M}=I_m \cup \mathcal{M}'$, where $I_m$ is an independent set of $\mathcal{W}$ and $\mathcal{M}'$ consists of one simplicial vertex per $C_i \in S(I_m)$.
\end{enumerate} 
\end{theorem}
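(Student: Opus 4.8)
The plan is to analyze an arbitrary \emph{MIS} $\mathcal{M}$ of $G$ by splitting it according to its intersection with the connection set $\mathcal{W}$. Write $I_m := \mathcal{M} \cap \mathcal{W}$ and $\mathcal{M}' := \mathcal{M} \setminus \mathcal{W}$. Since $\mathcal{M}$ is independent, $I_m$ is an independent subset of $\mathcal{W}$, so case (2) is the ``generic'' description and case (1) should emerge as the special subcase $I_m = \emptyset$. The key structural fact to exploit is that $\mathcal{C}(G)$ is a clique covering of $G$ (Definition \ref{defSCCG}), so every vertex — in particular every vertex of $\mathcal{M}'$ — lies in some simplicial clique, and within a single clique $\mathcal{M}$ can contain at most one vertex.

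First I would show that $\mathcal{M}'$ contains exactly one vertex from each $C_i \in S(I_m)$ and no vertex from any $C_i \in \overline{S}(I_m)$. For the ``at most one'' direction in each clique: a clique is a complete subgraph, so an independent set meets it in at most one vertex. For the ``none from $\overline{S}(I_m)$'' direction: if $C_i \subseteq N[I_m]$, then every vertex of $C_i$ is either in $I_m$ or adjacent to a vertex of $I_m$, so no vertex of $C_i$ can be added to $\mathcal{M}$ outside $\mathcal{W}$ without destroying independence — hence $\mathcal{M}'$ is disjoint from such $C_i$. For the ``at least one from each $C_i \in S(I_m)$'' direction, I would use maximality of $\mathcal{M}$: suppose $C_i \in S(I_m)$ but $\mathcal{M}$ contains no vertex of $C_i$. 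Since $C_i \not\subseteq N[I_m]$, pick $u \in C_i \setminus N[I_m]$. I then need $u$ to be non-adjacent to all of $\mathcal{M}$; the candidate is a simplicial vertex $v$ of $C_i$. Here is the crux: because $C_i$ is simplicial, it has a simplicial vertex $v$ with $N[v] = C_i$, so the only neighbors of $v$ lie inside $C_i$; if $\mathcal{M}$ avoids $C_i$ entirely then $v$ is non-adjacent to all of $\mathcal{M}$, contradicting maximality unless $v \in \mathcal{M}$ — but $v \notin \mathcal{W}$ (a simplicial vertex $v$ with $N[v]=C_i$ cannot lie in a second maximal clique), so $v \in \mathcal{M}'$, giving the desired vertex of $C_i$ in $\mathcal{M}'$. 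This simultaneously forces the vertex of $\mathcal{M}'$ lying in $C_i$ to be describable as a simplicial vertex: if $\mathcal{M}$ meets $C_i$ in a non-simplicial vertex $w$, then $v \notin N[w]$ would again force $v$ into $\mathcal{M}$, so in fact one shows the unique $\mathcal{M}'$-vertex in each $C_i \in S(I_m)$ can be taken to be (or coincides with) a simplicial vertex of $C_i$. I would need to handle carefully the possibility that a clique contains several simplicial vertices and that distinct cliques in $S(I_m)$ are forced to be distinct — the latter following from Remark \ref{remS_mempty} and the definition of $S(I_m)$ as counting cliques, together with the fact that a vertex of $\mathcal{M}'$ lies outside $\mathcal{W}$ hence in a unique simplicial clique.

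Finally I would dispatch case (1): when $I_m = \emptyset$ we have $N[I_m] = \emptyset$, so no simplicial clique is contained in $N[I_m]$, i.e. $S(\emptyset) = \mathcal{C}(G)$ and $s_m = sc(G)$; applying the previous paragraph, $\mathcal{M} = \mathcal{M}'$ consists of exactly one simplicial vertex per $C_i \in \mathcal{C}(G)$, which is precisely statement (1) with $\mathcal{M} = \{v_1, \dots, v_{sc(G)}\}$. For $I_m \neq \emptyset$ we land in statement (2) directly. I expect the main obstacle to be the bookkeeping around simplicial vertices versus arbitrary vertices of $\mathcal{M}'$: showing that the forced vertex of $\mathcal{M}'$ in each $C_i \in S(I_m)$ really is a simplicial vertex (not merely that $C_i$ is met), and ruling out degenerate overlaps — e.g. that a vertex of $\mathcal{M}'$ cannot secretly belong to two cliques of $S(I_m)$ (impossible since $\mathcal{M}' \cap \mathcal{W} = \emptyset$) and that the cliques themselves are genuinely distinct. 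The simplicial-vertex property $N[v] = C_i$ is doing all the heavy lifting in converting "maximality of $\mathcal{M}$" into "$\mathcal{M}$ meets $C_i$", so I would state and use it as a small lemma at the outset.
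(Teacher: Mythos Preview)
Your decomposition $\mathcal{M} = I_m \cup \mathcal{M}'$ with $I_m = \mathcal{M}\cap\mathcal{W}$ is exactly the paper's approach, and your argument is considerably more detailed: the paper simply asserts that each $v_i\in\mathcal{M}'$ ``must be simplicial and non-adjacent to vertices in $I_m$'' without the careful at-most-one / none-from-$\overline{S}(I_m)$ / at-least-one-from-$S(I_m)$ breakdown you give. Your use of the property $N[v]=C_i$ for a simplicial vertex $v$ to force $\mathcal{M}$ to meet each $C_i\in S(I_m)$ is the right mechanism and is cleaner than what the paper writes.

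However, the step where you upgrade ``$\mathcal{M}'$ meets $C_i$'' to ``$\mathcal{M}'$ meets $C_i$ in a \emph{simplicial} vertex'' is wrong. You write: ``if $\mathcal{M}$ meets $C_i$ in a non-simplicial vertex $w$, then $v\notin N[w]$ would again force $v$ into $\mathcal{M}$.'' But $v$ and $w$ both lie in the clique $C_i$, so $v\in N[w]$ automatically; the premise is false and nothing is forced. In fact the conclusion you are aiming for is not true: in the paper's own Example~1 (Figure~\ref{exampleSCCG1}) one has $\mathcal{W}=\emptyset$, and $\{v_3,v_6,v_9\}$ is a \emph{MIS} whose vertex $v_3\in C_1$ is \emph{not} simplicial (its closed neighborhood contains $v_4,v_5,v_7,v_8$ and is not a clique). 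What your argument actually establishes --- and what the paper's proof also implicitly yields --- is that $\mathcal{M}'$ contains exactly one vertex of $C_i\setminus W_i$ for each $C_i\in S(I_m)$. The paper conflates ``vertex in $C_i\setminus W_i$'' with ``simplicial vertex of $C_i$'', and you were right to flag this as the main obstacle; it is a genuine imprecision in the theorem statement rather than something you can repair. For the downstream applications (Lemmas~\ref{lemmafconstant} and~\ref{lemmafsum}) the set $C_i\setminus W_i$ is what is actually used, so the looseness is harmless there.
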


\begin{proof}
Assume that $\mathcal{M}$ is not as in (1).  Then, $\mathcal{M} \cap \mathcal{W} \neq \emptyset$, and thus $\mathcal{M} \cap \mathcal{W}=I_m,$ for some independent set $I_m$ of $\mathcal{W}.$ 

If $\mathcal{M}-I_m= \emptyset$, then $I_m$ is a \emph{MIS} of $G,$ and thus (see Remark \ref{remS_mempty}) $\mathcal{M}$ is as in (2) with $\mathcal{M}'=\emptyset$.

If $\mathcal{M}-I_m=\mathcal{M}' \neq \emptyset$, then it is clear that $\mathcal{W} \cap \mathcal{M}' =\emptyset$. It follows that each $v_i \in \mathcal{M}'$ must be simplicial and  non-adjacent to vertices in $I_m$. Thus, $\mathcal{M}$ is as in (2).
\end{proof}

With all notation in place and with the help of Theorem \ref{miscgthm}, it is possible to count the \emph{MIS}s of any \emph{SCCG}.

\begin{theorem}\label{countmisthm}
Let $G$ be a \emph{SCCG}. Then, $G$ has exactly 
\[
|\mathcal{I}|+ \prod_{i=1}^{sc(G)} |C_i -W_i|+\sum_{m=1}^M \prod_{C_i \in S(I_m)} |C_i - W_i|
\]
maximal independent sets, where $\mathcal{I}$ is the family of all independent sets of $\mathcal{W}$ that are \emph{MIS}s of $G.$
\end{theorem}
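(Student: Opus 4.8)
The plan is to count maximal independent sets by sorting them into the three types that Theorem \ref{miscgthm} makes available, and to argue that these three families are pairwise disjoint so that their sizes simply add. First I would set aside the family $\mathcal{I}$ of independent sets of $\mathcal{W}$ that happen to be \emph{MIS}s of $G$; by Remark \ref{remS_mempty}(1) these are exactly the \emph{MIS}s $\mathcal{M}$ with $\mathcal{M} \subseteq \mathcal{W}$ and $S(\mathcal{M}) = \emptyset$, and they contribute the term $|\mathcal{I}|$. Every remaining \emph{MIS} meets $V(G) \setminus \mathcal{W}$, hence contains a simplicial vertex lying in only one simplicial clique, and Theorem \ref{miscgthm} splits these into case (1), where $\mathcal{M}$ is disjoint from $\mathcal{W}$ entirely, and the ``mixed'' case (2) with $I_m \neq \emptyset$ a nonempty independent subset of $\mathcal{W}$ together with $\mathcal{M}' \neq \emptyset$.

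For the case-(1) count, I would show that such an \emph{MIS} is determined by choosing, for each $C_i \in \mathcal{C}(G)$, one vertex of $C_i \setminus W_i$, i.e.\ one non-connection (hence simplicial) vertex of $C_i$. The choices are independent across distinct $C_i$ because two such vertices, lying in different simplicial cliques and neither in $\mathcal{W}$, are never adjacent (an edge between them would force one of them into both cliques' neighborhoods and, via simpliciality, into a shared clique — contradiction); conversely any such selection is maximal since every vertex of $G$ lies in some $C_i$ and is either the chosen vertex or adjacent to it, or lies in $\mathcal{W}$ and is adjacent to the chosen simplicial vertex of some clique containing it. This yields the product $\prod_{i=1}^{sc(G)} |C_i - W_i|$. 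For the mixed case, fix an independent set $I_m \subset \mathcal{W}$ that is \emph{not} already an \emph{MIS}; indexing these by $m = 1, \dots, M$, Theorem \ref{miscgthm}(2) says the \emph{MIS}s extending $I_m$ are obtained by choosing one simplicial vertex from each $C_i \in S(I_m)$, and one checks as above that (a) such a vertex must be chosen from $C_i - W_i$ (a connection vertex of $C_i$ would either lie in $N[I_m]$ or, being in another simplicial clique, create further obstructions), (b) the choices over $C_i \in S(I_m)$ are mutually independent and independent of $I_m$, and (c) every resulting set is a genuine \emph{MIS}. This gives $\sum_{m=1}^{M} \prod_{C_i \in S(I_m)} |C_i - W_i|$. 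Finally the three families are disjoint: a type-(1) set misses $\mathcal{W}$ while type-$\mathcal{I}$ and mixed sets meet $\mathcal{W}$; a type-$\mathcal{I}$ set lies inside $\mathcal{W}$ while a mixed set has $\mathcal{M}' \neq \emptyset$; and distinct $I_m$'s give disjoint collections since $\mathcal{M} \cap \mathcal{W}$ recovers $I_m$. Summing the three contributions gives the stated formula.

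The main obstacle I anticipate is the careful verification in case (2) that the simplicial vertices must come precisely from $C_i - W_i$ rather than from all of $C_i$, and that no double-counting occurs between different $I_m$ or between a chosen $I_m$ and vertices that end up in $\mathcal{M}'$ — in other words, pinning down exactly which cliques lie in $S(I_m)$ and confirming that the map $\mathcal{M} \mapsto (\mathcal{M}\cap\mathcal{W}, \ \mathcal{M}\setminus\mathcal{W})$ is a bijection onto the indicated index set. The adjacency arguments showing independence of the per-clique choices rely on the defining property of simplicial vertices ($N[v]$ a maximal clique) and should be stated as a short lemma or inline claim and then reused in all three counts; everything else is bookkeeping with the product rule.
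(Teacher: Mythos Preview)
Your proposal is correct and follows essentially the same approach as the paper: split the \emph{MIS}s via Theorem~\ref{miscgthm} into the purely-simplicial case, the mixed case indexed by nonempty independent sets $I_m \subset \mathcal{W}$, and the degenerate case $S(I_m)=\emptyset$ supplying the $|\mathcal{I}|$ term, then count each piece by the product rule over $C_i - W_i$. Your writeup is in fact more careful than the paper's own proof --- you explicitly verify independence of the per-clique choices, pin down that the simplicial vertex must lie in $C_i - W_i$, and check disjointness of the three families --- whereas the paper states these counts somewhat tersely and handles the $|\mathcal{I}|$ term via the (nonstandard) convention that an empty product vanishes.
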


\begin{proof}
By Theorem \ref{miscgthm}, each \emph{MIS} of $G$ takes one of two forms. Let $\mathcal{M}$ be a \emph{MIS} of $G$.

Suppose that $\mathcal{M}$ is of form (1) in Theorem \ref{miscgthm}. Then, each $v_i\in \mathcal{M}$ is exactly one out of the $|C_i - W_i|$ simplicial vertices of $C_i \in \mathcal{C}(G)$. Hence, there are 
\[
\prod_{i=1}^{sc(G)} |C_i -W_i|
\]
\emph{MIS}s of this form.

Suppose that $\mathcal{M}$ is of form (2) in Theorem \ref{miscgthm} and that $G$ has exactly $M$ independent sets $I_m$. Since each $v_i \in \mathcal{M} - I_m$ can be exactly one out of the $|C_i - W_i|$ simplicial vertices of $C_i \in S(I_m)$, there are 
\[
\sum_{m=1}^M \ \prod_{C_i \in S(I_m)} |C_i - W_i|
\]
\emph{MIS}s of this form.

Observe that 
\[
\prod_{C_i \in S(I_m)} |C_i - W_i|
\]
vanishes for those $I_m$ that are \emph{MIS}s of $G$ because $S(I_m)=\emptyset$. Thus, we must add all independent sets of $\mathcal{W}$ that are \emph{MIS}s of $G.$ Letting $\mathcal{I}$ be the family of all such sets, we add $|\mathcal{I}|$ \emph{MIS}s to complete our count.
\end{proof}

Next, we look at the defining properties of well-covered weightings of \emph{SCCG}s.

\begin{lemma}\label{lemmafconstant}
Let $G$ be a \emph{SCCG}. Let $\mathcal{W}$ be a connection set and $f$ be a well-covered weighting of $G$. Then, $f$ is constant on $C_i-W_i$ for each $C_i \in \mathcal{C}(G)$.
\end{lemma}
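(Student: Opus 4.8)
The plan is to fix a simplicial clique $C_i \in \mathcal{C}(G)$ and two simplicial vertices $u, v \in C_i - W_i$, and to show $f(u) = f(v)$ by exhibiting two \emph{MIS}s of $G$ that differ only in swapping $u$ for $v$; since $f$ is a well-covered weighting, the two sums agree, and all other terms cancel, forcing $f(u) = f(v)$. The natural candidates come straight from Theorem \ref{miscgthm}(1): pick one simplicial vertex from each simplicial clique $C_j$, choosing $u$ from $C_i$ in the first set and $v$ from $C_i$ in the second, with the \emph{same} choice in every $C_j$ with $j \neq i$. First I would verify that such a form-(1) selection is genuinely independent and maximal: the vertices are simplicial and lie in distinct simplicial cliques, and since $\mathcal{C}(G)$ is a clique covering every vertex of $G$ lies in some $C_j$ and is hence adjacent to (or equal to) the chosen representative — this is exactly why Definition \ref{defSCCG} demands that the simplicial cliques cover $G$. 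So both selections are \emph{MIS}s, their $f$-sums are equal, and subtracting gives $f(u) - f(v) = 0$.

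The one subtlety is that $C_i - W_i$ could contain fewer than two vertices, in which case there is nothing to prove (a function is trivially constant on a set of size $\leq 1$); and one should also confirm $C_i - W_i \neq \emptyset$, which holds because $C_i$ is a simplicial clique and therefore contains at least one simplicial vertex, and a simplicial vertex of $C_i$ cannot lie in another simplicial clique — indeed if a simplicial vertex $w$ had $N[w] = C_i$ maximal, membership of $w$ in a second clique $C_j$ would force $C_j \subseteq N[w] = C_i$, contradicting maximality of $C_j$ unless $C_j = C_i$. Hence the simplicial vertices of $C_i$ are precisely the elements of $C_i - W_i$, matching the counting in Theorem \ref{countmisthm}, and the swap argument applies whenever $|C_i - W_i| \geq 2$.

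I do not expect a serious obstacle here: the heart of the argument is the standard "local exchange" trick for well-covered weightings, and everything needed (the classification of \emph{MIS}s, the covering property, the characterization of simplicial vertices) is already available. The only place to be careful is making the independence/maximality check for the constructed form-(1) sets airtight, and handling the degenerate small-clique case explicitly so the statement is vacuously true there. An alternative, if one wants to avoid re-choosing a full system of representatives, is to start from an arbitrary \emph{MIS} $\mathcal{M}$ with $u \in \mathcal{M}$ (which exists by the covering property) and replace $u$ by $v$; since $u, v \in C_i$ have the same closed neighborhood among simplicial vertices of $C_i$ and $v \notin \mathcal{W}$, the set $(\mathcal{M} \setminus \{u\}) \cup \{v\}$ is again an \emph{MIS}, and the same cancellation yields $f(u) = f(v)$. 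Either route closes the lemma.
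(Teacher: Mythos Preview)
Your proposal is correct and follows essentially the same approach as the paper: both arguments fix $C_i$, use form-(1) \emph{MIS}s from Theorem~\ref{miscgthm} that agree on every clique except $C_i$, and subtract to force equal weights on $C_i - W_i$. Your write-up is in fact more careful than the paper's, since you explicitly verify independence and maximality of the form-(1) selections and handle the degenerate case $|C_i - W_i|\le 1$, whereas the paper simply cites Theorems~\ref{miscgthm} and~\ref{countmisthm} for the existence of the needed $|C_i - W_i|$ \emph{MIS}s sharing $sc(G)-1$ vertices.
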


\begin{proof}
Let $\mathbb{F}$ be a field and let $f:V(G) \rightarrow \mathbb{F}$ be a well-covered weighting of $G$.

Pick an arbitrary $C_i \in \mathcal{C}(G)$. By (1) of Theorem \ref{miscgthm} together with Theorem \ref{countmisthm}, we can find $|C_i-W_i|$ \emph{MIS}s of $G$ of cardinality $sc(G)$ that have $sc(G)-1$ vertices in common and as the $sc(G)$-th vertex, a distinct $v_i \in C_i-W_i$. Then, all of the vertices in $C_i - W_i$ have the same weight under $f$.  Since $C_i$ was chosen arbitrarily, the result follows. 
\end{proof}

\begin{lemma}\label{lemmafsum}
Let $G$ be a \emph{SCCG}. Let $\mathcal{W}$ be a connection set and $f$ be a well-covered weighting of $G$. For any $w \in \mathcal{W}$, 
\[
f(w)=\sum f(v),
\]
where the sum is taken over a set of simplicial vertices, each of which belongs to a distinct $C_i \in \overline{S}(\{w\})$.
\end{lemma}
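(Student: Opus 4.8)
The plan is to exhibit two explicit \emph{MIS}s of $G$ whose $f$-sums, once a common block of summands is cancelled, yield exactly $f(w)$ on one side and the claimed sum over $\overline{S}(\{w\})$ on the other; equating the two sums via the well-covered property of $f$ then finishes.

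First I would record a few structural observations. (a) Any family of simplicial vertices chosen one per clique from \emph{distinct} maximal cliques is automatically independent: if $v$ is simplicial, then $N[v]$ is the unique maximal clique containing $v$, so a simplicial vertex $v'$ lying in a different maximal clique cannot be adjacent to $v$ (otherwise $v' \in N[v]$ would force $v'$ into $v$'s clique, contradicting that $v'$ sits in a unique, different one); in particular no simplicial vertex lies in $\mathcal{W}$, so the simplicial vertices of $C_i$ all lie in $C_i - W_i$. (b) $\overline{S}(\{w\})$ is precisely the set of simplicial cliques containing $w$: if $C_i$ is simplicial with $C_i \subseteq N[w]$ but $w \notin C_i$, then $C_i \cup \{w\}$ would be a clique properly containing $C_i$, which is impossible; conversely $w \in C_i$ gives $C_i \subseteq N[w]$. (c) For every $C_j \in S(\{w\})$, \emph{every} simplicial vertex $v_j$ of $C_j$ avoids $N[w]$, since $v_j \in N[w]$ would give $w \in N[v_j] = C_j$, placing $C_j$ in $\overline{S}(\{w\})$; note also $v_j \neq w$ because $w \in \mathcal{W}$ while $v_j$ is simplicial.

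Next I would build the two \emph{MIS}s. For each $C_i \in \overline{S}(\{w\})$ pick an arbitrary simplicial vertex $u_i \in C_i$, and for each $C_j \in S(\{w\})$ pick an arbitrary simplicial vertex $v_j \in C_j$. Put $\mathcal{M} = \{w\} \cup \{v_j : C_j \in S(\{w\})\}$ and $\mathcal{N} = \{u_i : C_i \in \overline{S}(\{w\})\} \cup \{v_j : C_j \in S(\{w\})\}$. By (a) and (c), $\mathcal{M}$ is independent; it is maximal because $\mathcal{C}(G)$ covers $V(G)$, so any vertex $u \notin \mathcal{M}$ lies in some $C_k$, hence is adjacent to $w$ when $C_k \in \overline{S}(\{w\})$ (as $w \in C_k$ by (b)) or to $v_k$ when $C_k \in S(\{w\})$ (as $N[v_k] = C_k \ni u$). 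The same reasoning, using (a), shows $\mathcal{N}$ is a \emph{MIS}; it has exactly one simplicial vertex per simplicial clique, i.e.\ it is of the form (1) in Theorem~\ref{miscgthm}. Since $f$ is a well-covered weighting, $\sum_{v \in \mathcal{M}} f(v) = \sum_{v \in \mathcal{N}} f(v)$; the summands indexed by $S(\{w\})$ agree term by term on both sides, so cancelling them gives $f(w) = \sum_{C_i \in \overline{S}(\{w\})} f(u_i)$, which is the asserted identity. By Lemma~\ref{lemmafconstant}, $f$ is constant on $C_i - W_i$, so $f(u_i)$ is independent of which simplicial vertex of $C_i$ was chosen and the statement is unambiguous.

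The routine comparison of the two sums is immediate once the \emph{MIS}s are in hand; I expect the only real friction to be the bookkeeping in (b) and (c) — correctly identifying $\overline{S}(\{w\})$ with the simplicial cliques through $w$ and confirming that the chosen $v_j$ are genuinely non-adjacent to $w$ — together with the verification that $\mathcal{M}$ is actually maximal.
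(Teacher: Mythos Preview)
Your proof is correct and follows essentially the same approach as the paper's: you build the same two \emph{MIS}s --- one containing $w$ together with simplicial vertices from $S(\{w\})$, the other replacing $w$ by simplicial vertices from $\overline{S}(\{w\})$ --- and cancel the common block. The paper's version is terser, invoking Theorem~\ref{miscgthm} to certify that the two sets are \emph{MIS}s, whereas you verify independence and maximality from scratch via your observations (a)--(c); your added identification of $\overline{S}(\{w\})$ with the simplicial cliques through $w$ is a nice clarification the paper leaves implicit.
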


\begin{proof}
Let $w \in \mathcal{W}$ and let $I_m=\{w\}$. For any $i \in \mathbb{N}_{\leq sc(G)}$, let $u_i$ and $v_i$ be simplicial vertices of $G$ such that each $u_i$ belongs to a distinct $C_i \in S(I_m)$ and each $v_i$ belongs to a distinct $C_i \in \overline{S}(I_m)$. Consider a set  $\mathcal{M}=I_m \cup \{u_1, \hdots, u_{s_m}\}$ and a set $\{v_1,\hdots,v_{sc(G)-s_m}\}$. Note that $\mathcal{M}$ is of form (2) in Theorem \ref{miscgthm} and $\mathcal{M}'=(\mathcal{M}-I_m)\cup \{v_1,\hdots,v_{sc(G)-s_m}\}$ is of form (1) in Theorem \ref{miscgthm}. It follows that 
\[
f(w)=\sum f(v),
\]
where each $v$ in the sum is simplicial and belongs to a distinct $C_i \in \overline{S}(\{w\})$.
\end{proof}

\begin{remark}
When needed, we use the following notation for vectors in $\mathbb{F}^{n}:$
\[
(a_1^{n_1} \mid a_2^{n_2} \mid \hdots \mid a_k^{n_k}):=(\underbrace {a_1,\hdots,a_1}_{n_1-times},\underbrace{a_2,\hdots,a_2}_{n_2-times},\hdots,\underbrace{a_k,\hdots,a_k}_{n_k-times}),
\]
where $n=\sum_{i=1}^k n_i$.
\end{remark}

Let $G$ be a \emph{SCCG}. We identify each well-covered weighting of $G$ with an $n$-tuple $\textbf{x}=(f(v_1),\hdots,f(v_n)) \in \mathbb{F}^{n}$. We call the vector space of all such $n$-tuples $\mathbb{V}$. It is clear that $wcdim(G) = \dim(\mathbb{V})$.

Let \(\mathcal{W}\) be a connection set of $G$. For any $i \in \mathbb{N}_{\leq sc(G)}$, let $t_i=|C_i - W_i|$ and let $l=|\mathcal{W}|$. Suppose that $G$ has order $n=k+l$, where $k=\sum_{i=1}^{sc(G)} t_i$. Then, using Lemma  \ref{lemmafconstant},  any vector $\textbf{x} \in \mathbb{V}$ may be expressed as 
\[
\textbf{x}  = (f(w_1),\hdots,f(w_l)\mid a_1^{t_1} \mid a_2^{t_2} \mid  \hdots \mid a_{sc(G)}^{t_{sc(G)}}),
\] 
where we have placed the weights of the connection vertices of $G$ first. Now we can use Lemma \ref{lemmafsum} to get that the first $l$ components of $\textbf{x}$ are linear combinations of the $a_i$'s. It follows that every $\textbf{x} \in \mathbb{V}$ can be written as a linear combination of at most $sc(G)$ linearly independent vectors. This means that $wcdim(G)\leq sc(G)$. Since we already knew that $wcdim(G)\geq sc(G)$, for any graph $G$,  we obtain the main theorem of this section, which is stated below. 

\begin{theorem}\label{thmGabriella}
Let $G$ be a \emph{SCCG}. Then, $wcdim(G)=sc(G)$.
\end{theorem}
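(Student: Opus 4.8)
The plan is to prove the two inequalities $wcdim(G) \le sc(G)$ and $wcdim(G) \ge sc(G)$ separately. The second is immediate: Brown and Nowakowski have shown $wcdim(G) \ge sc(G)$ for \emph{every} graph $G$, so it applies in particular to a \emph{SCCG}. Hence the entire content of the proof lies in the upper bound $wcdim(G) \le sc(G)$, and this is exactly what the discussion preceding the theorem statement assembles; I would organize it as follows.

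First I would fix a field $\mathbb{F}$ and identify each well-covered weighting $f$ of $G$ with the vector $\mathbf{x} = (f(v_1), \dots, f(v_n)) \in \mathbb{F}^n$, so that $\mathbb{V}$, the space of all such vectors, satisfies $\dim_{\mathbb{F}}(\mathbb{V}) = wcdim(G)$. Next I would order the vertices of $G$ so that the $l = |\mathcal{W}|$ connection vertices come first, followed by the vertices of $C_1 - W_1$, then $C_2 - W_2$, and so on; by Remark \ref{remS_mempty}(2) and the definition of a \emph{SCCG} this is indeed a listing of all $n = k + l$ vertices, where $k = \sum_{i=1}^{sc(G)} t_i$ and $t_i = |C_i - W_i|$. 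Then I would invoke Lemma \ref{lemmafconstant}: $f$ is constant on each block $C_i - W_i$, so $\mathbf{x}$ has the form $(f(w_1), \dots, f(w_l) \mid a_1^{t_1} \mid \dots \mid a_{sc(G)}^{t_{sc(G)}})$, which already confines $\mathbb{V}$ to an $(sc(G) + l)$-dimensional coordinate subspace. Finally I would apply Lemma \ref{lemmafsum}: for each connection vertex $w_j$, the weight $f(w_j)$ equals a sum of the weights of certain simplicial vertices lying in distinct cliques of $\overline{S}(\{w_j\})$, i.e., $f(w_j)$ is an $\mathbb{F}$-linear combination of the $a_i$'s. Substituting these $l$ linear expressions into the first $l$ coordinates shows every $\mathbf{x} \in \mathbb{V}$ is determined by $(a_1, \dots, a_{sc(G)})$ alone, so $\mathbb{V}$ is the image of a linear map from $\mathbb{F}^{sc(G)}$ and therefore $wcdim(G) = \dim_{\mathbb{F}}(\mathbb{V}) \le sc(G)$. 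Combining with the Brown--Nowakowski lower bound yields equality.

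The only real subtlety — and it has already been discharged by the two lemmas, so I would just make sure to cite them at the right moments — is that applying Lemma \ref{lemmafsum} to rewrite each $f(w_j)$ genuinely requires that, for every $w \in \mathcal{W}$, the singleton $\{w\}$ be an independent subset of $\mathcal{W}$ (trivially true) and that there actually exist \emph{MIS}s of both forms (1) and (2) of Theorem \ref{miscgthm} sharing the appropriate simplicial vertices, which is precisely what Theorem \ref{countmisthm} and the proofs of Lemmas \ref{lemmafconstant} and \ref{lemmafsum} guarantee. Since all of that machinery is in hand, the proof of Theorem \ref{thmGabriella} itself is essentially a one-line assembly: "By the preceding discussion, $wcdim(G) \le sc(G)$; by \cite{BN}, $wcdim(G) \ge sc(G)$; hence $wcdim(G) = sc(G)$." I would present it in exactly that compressed form, since the work is in Lemmas \ref{lemmafconstant} and \ref{lemmafsum} and in the paragraph before the statement.
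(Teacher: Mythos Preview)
Your proposal is correct and follows essentially the same approach as the paper: the paper's proof is precisely the paragraph preceding the theorem statement, which identifies weightings with vectors, uses Lemma~\ref{lemmafconstant} for the block structure, Lemma~\ref{lemmafsum} to express the connection-vertex weights as linear combinations of the $a_i$'s, and then appeals to the Brown--Nowakowski lower bound. Your write-up is in fact slightly more explicit about the vertex ordering and the linear-algebra conclusion than the paper's own discussion, but the argument is the same.
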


We now give some examples of applications of Theorem \ref{thmGabriella}. 

\begin{example}
Consider the graph $G$ given by
\begin{center}
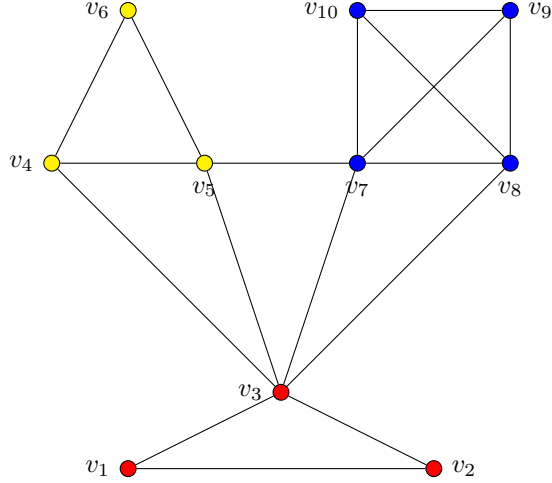
\begin{figure}[H]
\begin{tikzpicture}[x=.4in, y=.4in]
\vertex (w1) at (0,0) [label=below:$v_7$, style={fill=blue}]{};
\vertex (w2) at (2,0)[label=below:$v_8$, style={fill=blue}]{};
\vertex (w3) at (2,2)[label=right:$v_9$, style={fill=blue}]{};
\vertex (w4) at (0,2)[label=left:$v_{10}$, style={fill=blue}]{};
\vertex (w5) at (-2,0)[label=below:$v_5$, style={fill=yellow}]{};
\vertex (w6) at (-4,0)[label=left:$v_4$, style={fill=yellow}]{};
\vertex (w7) at (-3,2)[label=left:$v_6$, style={fill=yellow}]{};
\vertex (w8) at (-1,-3)[label=left:$v_3$, style={fill=red}]{};
\vertex (w9) at (-3,-4)[label=left:$v_1$, style={fill=red}]{};
\vertex (w10) at (1,-4)[label=right:$v_2$, style={fill=red}]{};
\path 
(w1) edge (w2)
(w2) edge (w3)
(w3) edge (w4)
(w4) edge (w1)
(w1) edge (w3)
(w2) edge (w4)
(w1) edge (w5)
(w5) edge (w6)
(w6) edge (w7)
(w5) edge (w7)
(w6) edge (w8)
(w5) edge (w8)
(w1) edge (w8)
(w2) edge (w8)
(w8) edge (w9)
(w9) edge (w10)
(w8) edge (w10);
\end{tikzpicture}
\caption{A \emph{SCCG} with $sc(G) = 3$ and empty connection set.}
\label{exampleSCCG1}
\end{figure}
\end{center}

$G$ is a \emph{SCCG} with $\mathcal{C}(G)=\{C_1,C_2,C_3\}$, where $|C_1|=|C_2|=3$ and $|C_3|=4$. For this particular $G$, $\mathcal{W}=\emptyset$. Note that $\mathcal{C}(G)$ is a minimum clique partition of $G$.

A basis for the well-covered space of $G$ is \[B_G=\{(1^3 \mid 0^7), (0^3 \mid 1^3 \mid 0^4), (0^6 \mid 1^4)\},\] using Lemma \ref{lemmafconstant}, and hence $wcdim(G)=3$. This is consistent with the result we would have obtained had we used Theorem \ref{thmGabriella}. 
\end{example}

Problems may arise if a \emph{SGGC}, $G$, were presented in some unrecognizable form. In that case, the problem of finding the well-covered dimension of $G$ is comparable to the problem of finding a minimum clique cover of $G$. 

Let $G$ be a \emph{SCCG} with $sc(G)=k,$ for some $k \in \mathbb{N}$, and let $C_i \in \mathcal{C}(G)$. Observe that the well-covered dimension of $G$ does not depend on $|C_i|$. Informally speaking, if we let $|C_i|\rightarrow \infty$, the well-covered dimension $G$ is still $k$. 

\begin{example}
Each graph in Figure \ref{exampleinf} is a \emph{SCCG} with simplicial clique number 2. So the well-covered dimension of each of these graphs is 2, although the set of simplicial cliques is distinct in each case.
\begin{center}
\begin{figure}[H]
{\begin{tikzpicture}[x=.4in, y=.4in]
\vertex (w1) at (0,0) [style={fill=blue}]{};
\vertex (w2) at (1,0)[style={fill=red}]{};
\vertex (w5) at (-0.5,1)[style={fill=blue}]{};
\vertex (w3) at (2,0)[style={fill=red}]{};
\vertex (w4) at (-1,0)[style={fill=blue}]{};
\path 
(w4) edge (w1)
(w1) edge (w2)
(w2) edge (w3)
(w5) edge (w1)
(w5) edge (w4);
\end{tikzpicture}}\hspace{.1in}  \hspace{.1in}{\begin{tikzpicture}[x=.4in, y=.4in]
\vertex (w1) at (0,0) [label=left:{}, style={fill=blue}]{};
\vertex (w2) at (1,0)[label=right:{}, style={fill=red}]{};
\vertex (w3) at (2,0)[style={fill=red}]{};
\vertex (w4) at (-1,0)[style={fill=blue}]{};
\vertex (w5) at (-1,1)[style={fill=blue}]{};
\vertex (w6) at (0,1)[style={fill=blue}]{};
\path 
(w5) edge (w6)
(w5) edge (w1)
(w6) edge (w1)
(w5) edge (w4)
(w6) edge (w4)
(w1) edge (w4)
(w2) edge (w3)
(w1) edge (w2);
\end{tikzpicture}}\hspace{.3in}
{\begin{tikzpicture}[x=.4in, y=.4in]
\vertex (w1) at (0,0) [style={fill=blue}]{};
\vertex (w2) at (1,0)[style={fill=red}]{};
\vertex (w3) at (2,0)[style={fill=red}]{};
\vertex (w4) at (-1,0)[style={fill=blue}]{};
\vertex (w5) at (-1.2,.8)[style={fill=blue}]{};
\vertex (w6) at (0.2,.8)[style={fill=blue}]{};
\vertex (w7) at (-0.5,1.5)[style={fill=blue}]{};
\path
(w6) edge (w7)
(w5) edge (w6)
(w5) edge (w7) 
(w1) edge (w5)
(w1) edge (w6)
(w1) edge (w7)
(w4) edge (w5)
(w4) edge (w6)
(w4) edge (w7)
(w4) edge (w1)
(w1) edge (w2)
(w2) edge (w3);
\end{tikzpicture}}\raisebox{.4in}{\Huge{$\hdots$}}
\caption{An infinite family of \emph{SCCG}s with $wcdim = 2$.}
\label{exampleinf}
\end{figure}
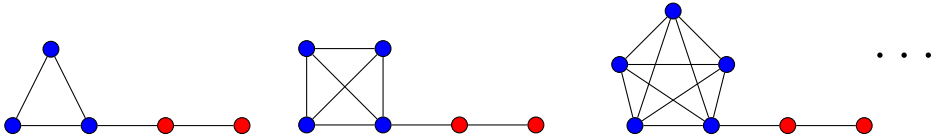
\end{center}
Thus, there is an infinite number of \emph{SCCG}s with well-covered dimension any positive integer. 
\end{example}

There are other ways of obtaining a family of \emph{SCCG}s with some desired well-covered dimension. In particular, if we add or remove edges between vertices of a given \emph{SCCG}, making sure that we do not create or delete a simplicial clique, we obtain a family of \emph{SCCG}s with the same well-covered dimension. However, this resulting family might not be infinite. 

\begin{example}
Consider the following \emph{SCCG}. 
\[ 
\begin{tikzpicture}[x=.4in, y=.4in]
\vertex (w1) at (0,0) [style={fill=blue}]{};
\vertex (w2) at (1,0)[style={fill=red}]{};
\vertex (w5) at (-0.5,1)[style={fill=blue}]{};
\vertex (w3) at (2,0)[style={fill=red}]{};
\vertex (w4) at (-1,0)[style={fill=blue}]{};
\vertex (w6) at (1.5,1)[style={fill=red}]{};
\path 
(w4) edge (w1)
(w1) edge (w2)
(w2) edge (w3)
(w5) edge (w1)
(w5) edge (w4)
(w6) edge (w3)
(w6) edge (w2);
\end{tikzpicture}
\] 
For this particular graph, we may obtain only finitely many \emph{SCCG}s by removing and adding edges. Examples of these graphs are illustrated below.
\[ 
{\begin{tikzpicture}[x=.4in, y=.4in]
\vertex (w1) at (0,0){};
\vertex (w2) at (1,0)[style={fill=red}]{};
\vertex (w3) at (0.5,1)[style={fill=red}]{};
\vertex (w4) at (-0.5,1)[style={fill=blue}]{};
\vertex (w5) at (-1,0)[style={fill=blue}]{};
\path 
(w1) edge (w2)
(w2) edge (w3)
(w1) edge (w3)
(w1) edge (w4)
(w4) edge (w5)
(w1) edge (w5);
\end{tikzpicture}}
\]
\[ 
{\begin{tikzpicture}[x=.4in, y=.4in]
\vertex (w1) at (0,0) [style={fill=blue}]{};
\vertex (w2) at (1,0)[style={fill=red}]{};
\vertex (w5) at (-0.5,1)[style={fill=blue}]{};
\vertex (w3) at (2,0)[style={fill=red}]{};
\vertex (w4) at (-1,0)[style={fill=blue}]{};
\vertex (w6) at (1.5,1)[style={fill=red}]{};
\path 
(w4) edge (w1)
(w1) edge (w2)
(w2) edge (w3)
(w5) edge (w1)
(w5) edge (w4)
(w6) edge (w3)
(w6) edge (w2)
(w1) edge (w6);
\end{tikzpicture}}\hspace{.2in}{\begin{tikzpicture}[x=.4in, y=.4in]
\vertex (w1) at (0,0) [style={fill=blue}]{};
\vertex (w2) at (1,0)[style={fill=red}]{};
\vertex (w5) at (-0.5,1)[style={fill=blue}]{};
\vertex (w3) at (2,0)[style={fill=red}]{};
\vertex (w4) at (-1,0)[style={fill=blue}]{};
\vertex (w6) at (1.5,1)[style={fill=red}]{};
\path 
(w4) edge (w1)
(w1) edge (w2)
(w2) edge (w3)
(w5) edge (w1)
(w5) edge (w4)
(w6) edge (w3)
(w6) edge (w2)
(w2) edge (w5)
(w5) edge (w6);
\end{tikzpicture}}
\]
\[ 
{\begin{tikzpicture}[x=.4in, y=.4in]
\vertex (w1) at (0,0) [style={fill=blue}]{};
\vertex (w2) at (1,0)[style={fill=red}]{};
\vertex (w5) at (-0.5,1)[style={fill=blue}]{};
\vertex (w3) at (2,0)[style={fill=red}]{};
\vertex (w4) at (-1,0)[style={fill=blue}]{};
\vertex (w6) at (1.5,1)[style={fill=red}]{};
\path 
(w4) edge (w1)
(w1) edge (w2)
(w2) edge (w3)
(w5) edge (w1)
(w5) edge (w4)
(w6) edge (w3)
(w6) edge (w2)
(w5) edge (w6);
\end{tikzpicture}}\hspace{.2in}{\begin{tikzpicture}[x=.4in, y=.4in]
\vertex (w1) at (0,0) [style={fill=blue}]{};
\vertex (w2) at (1,0)[style={fill=red}]{};
\vertex (w5) at (-0.5,1)[style={fill=blue}]{};
\vertex (w3) at (2,0)[style={fill=red}]{};
\vertex (w4) at (-1,0)[style={fill=blue}]{};
\vertex (w6) at (1.5,1)[style={fill=red}]{};
\path 
(w4) edge (w1)
(w1) edge (w2)
(w2) edge (w3)
(w5) edge (w1)
(w5) edge (w4)
(w6) edge (w3)
(w6) edge (w2)
(w2) edge (w5)
(w1) edge (w6);
\end{tikzpicture}}\hspace{.2in}{\begin{tikzpicture}[x=.4in, y=.4in]
\vertex (w1) at (0,0) [style={fill=blue}]{};
\vertex (w2) at (1,0)[style={fill=red}]{};
\vertex (w5) at (-0.5,1)[style={fill=blue}]{};
\vertex (w3) at (2,0)[style={fill=red}]{};
\vertex (w4) at (-1,0)[style={fill=blue}]{};
\vertex (w6) at (1.5,1)[style={fill=red}]{};
\path 
(w4) edge (w1)
(w1) edge (w2)
(w2) edge (w3)
(w5) edge (w1)
(w5) edge (w4)
(w6) edge (w3)
(w6) edge (w2)
(w2) edge (w5)
(w1) edge (w6)
(w5) edge (w6);
\end{tikzpicture}}
\] 

All of these graphs have well-covered dimension 2 and the same set of simplicial cliques. 
\end{example}

\section{The well-covered dimension of simplicial clique sums}

In this section, we obtain a class of graphs, with well-covered dimension equal to their simplicial clique number, that contains both chordal graphs and \emph{SCCG}s.

\begin{definition}\label{defsum}
Let $G_1$ and $G_2$ be sugraphs of a graph $\mathcal{G}$ such that $\mathcal{C}(G_1)$ and $\mathcal{C}(G_2)$ are non-empty. We say that $\mathcal{G}$ is the simplicial clique sum (\emph{SCS}) of $G_1$ and $G_2$ if
\begin{enumerate}
\item $V(G_1) \cup V(G_2)=V(\mathcal{G}),$
\item $E(G_1) \cup E(G_2)=E(\mathcal{G}),$ and
\item $V(G_1) \cap V(G_2)$ is a simplicial clique of $G_1$, $G_2$ and $\mathcal{G}$.
\end{enumerate}
\end{definition} 

\begin{remark}\label{intact}
Let $\mathcal{G}$ be the \emph{SCS} of $G_1$ and $G_2$. Then, for any $u \in V(G_1)- \left( V(G_1) \cap V(G_2)\right)$ and any $v \in V(G_2)-  \left(V(G_1) \cap V(G_2)\right)$, $uv \notin E(\mathcal{G})$. 
\end{remark}

Let $G$ be a chordal graph with $\mathcal{C}(G) \neq \emptyset$ and $G'$ be a \emph{SCCG}. Then, $K_n,$ for some $n \in \mathbb{N},$ is a subgraph of $G,$ and thus, $G$ may be understood to be the \emph{SCS} of $G_1=K_n$ and $G_2=G.$ Likewise, $G'$ can be understood to be the \emph{SCS} of one of its complete subgraphs with itself. This is a remarkable fact because it allows us to view the \emph{SCS} class of graphs, as a class that contains all chordal graphs and all \emph{SCCG}s. Figure \ref{figSCSplusSCCG} is an example of the \emph{SCS} of a \emph{SCCG} that is not chordal (red) and a chordal graph that is not a \emph{SCCG} (black). The yellow simplicial clique is their intersection. 

\begin{center}
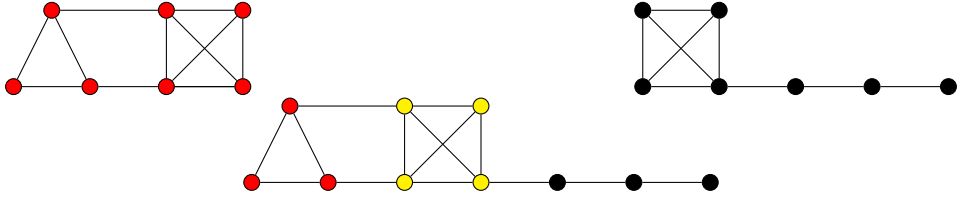
\begin{figure}[H]
{\begin{tikzpicture}[x=.4in, y=.4in]
\vertex (w1) at (0,0) [style={fill=red}]{};
\vertex (w2) at (1,0)[style={fill=red}]{};
\vertex (w5) at (-0.5,1)[style={fill=red}]{};
\vertex (w3) at (2,0)[style={fill=red}]{};
\vertex (w4) at (-1,0)[style={fill=red}]{};
\vertex (w6) at (1,1)[style={fill=red}]{};
\vertex (w7) at (2,1)[style={fill=red}]{};
\path
(w4) edge (w1)
(w1) edge (w2)
(w2) edge (w3)
(w5) edge (w1)
(w2) edge (w3)
(w3) edge (w6)
(w6) edge (w2)
(w5) edge (w6)
(w3) edge (w7)
(w6) edge (w7)
(w2) edge (w7)
(w5) edge (w4); \end{tikzpicture}}\hfill{\begin{tikzpicture}[x=.4in, y=.4in]

\vertex (w2) at (1,0)[style={fill=black}]{};
\vertex (w3) at (2,0)[style={fill=black}]{};
\vertex (w6) at (1,1)[style={fill=black}]{};
\vertex (w7) at (3,0)[style={fill=black}]{};
\vertex (w8) at (4,0)[style={fill=black}]{};
\vertex (w9) at (5,0)[style={fill=black}]{};
\vertex (w10) at (2,1)[style={fill=black}]{};
\path 
(w2) edge (w10)
(w3) edge (w10)
(w6) edge (w10)
(w2) edge (w3)
(w6) edge (w3)
(w6) edge (w2)
(w3) edge (w7)
(w7) edge (w8)
(w8) edge (w9);
\end{tikzpicture}}
 
\begin{tikzpicture}[x=.4in, y=.4in]
\vertex (w1) at (0,0) [style={fill=red}]{};
\vertex (w2) at (1,0)[style={fill=yellow}]{};
\vertex (w5) at (-0.5,1)[style={fill=red}]{};
\vertex (w3) at (2,0)[style={fill=yellow}]{};
\vertex (w4) at (-1,0)[style={fill=red}]{};
\vertex (w6) at (1,1)[style={fill=yellow}]{};
\vertex (w7) at (3,0)[style={fill=black}]{};
\vertex (w8) at (4,0)[style={fill=black}]{};
\vertex (w9) at (5,0)[style={fill=black}]{};
\vertex (w10) at (2,1)[style={fill=yellow}]{};

\path 
(w2) edge (w10)
(w6) edge (w10)
(w3) edge (w10)
(w4) edge (w1)
(w1) edge (w2)
(w2) edge (w3)
(w5) edge (w1)
(w5) edge (w4)
(w6) edge (w3)
(w6) edge (w2)
(w6) edge (w5)
(w3) edge (w7)
(w7) edge (w8)
(w8) edge (w9);
\end{tikzpicture}
\caption{\emph{SCS} of a \emph{SCCG} and a chordal graph.}
\label{figSCSplusSCCG}
\end{figure}
\end{center}

Next, is our main result on maximal independent sets of \emph{SCS}s.

\begin{theorem}\label{bigmis}
Let $\mathcal{G}$ be the \emph{SCS} of $G_1$ and $G_2$. Let $M_1$ be a \emph{MIS} of $G_1$, $M_2$ be a \emph{MIS} of $G_2$, and $M_1 \cap M_2 = \{v\}$, for some $v \in V(G_1) \cap V(G_2)$. Then, $\mathcal{M}$ is a \emph{MIS} of $\mathcal{G}$ if and only if $\mathcal{M}=M_1 \cup M_2.$
\end{theorem}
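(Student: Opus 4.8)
Write $K:=V(G_1)\cap V(G_2)$. By Definition~\ref{defsum}(3), $K$ is a simplicial clique of $\mathcal{G}$, so $K=N_{\mathcal{G}}[q]$ for some vertex $q$ that is simplicial in $\mathcal{G}$; this identity is the linchpin of the whole argument. Two elementary observations will be used repeatedly: (i) any independent set of $\mathcal{G}$ meets the clique $K$ in at most one vertex; and (ii) since $E(\mathcal{G})=E(G_1)\cup E(G_2)$ and an edge not lying in $E(G_i)$ joins two vertices of $V(G_j)$ — hence, if both its ends lie in $V(G_i)$ as well, two vertices of $K$, which are adjacent in $G_i$ — any two vertices both in $V(G_i)$ that are $\mathcal{G}$-adjacent are already $G_i$-adjacent. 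I would first prove that $\mathcal{M}=M_1\cup M_2$ is a MIS of $\mathcal{G}$. Since $M_1,M_2$ are independent, (i) gives $M_1\cap K=M_2\cap K=\{v\}$, so $M_1\setminus\{v\}\subseteq V(G_1)\setminus K$ and $M_2\setminus\{v\}\subseteq V(G_2)\setminus K$. For independence, take distinct $x,y\in\mathcal{M}$: if both lie in $M_1$ (or both in $M_2$) they are non-adjacent in $\mathcal{G}$ by (ii) together with the independence of $M_1,M_2$; if one lies in $M_1\setminus\{v\}$ and the other in $M_2\setminus\{v\}$, non-adjacency is Remark~\ref{intact}. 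For maximality, any $z\in V(\mathcal{G})\setminus\mathcal{M}$ lies in $V(G_1)$ or $V(G_2)$, say the former; then $z\notin M_1$, and the maximality of $M_1$ in $G_1$ makes $z$ adjacent in $G_1$ — hence in $\mathcal{G}$ — to a vertex of $M_1\subseteq\mathcal{M}$.

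For the converse, given a MIS $\mathcal{M}$ of $\mathcal{G}$, the plan is to put $M_1:=\mathcal{M}\cap V(G_1)$, $M_2:=\mathcal{M}\cap V(G_2)$; since $V(\mathcal{G})=V(G_1)\cup V(G_2)$ we get $\mathcal{M}=M_1\cup M_2$, and it remains to check that $M_1,M_2$ are MISs of $G_1,G_2$ meeting in a single vertex of $K$ (so that $\mathcal{M}$ does have the asserted form). Because $\mathcal{M}$ is a dominating set of $\mathcal{G}$ and $N_{\mathcal{G}}[q]=K$, the vertex $q$ is dominated, so $\mathcal{M}\cap K\neq\emptyset$; by (i) this means $M_1\cap M_2=\mathcal{M}\cap K=\{v\}$ for a unique $v\in K$. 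Each $M_i$ is independent in $G_i$, and for the maximality of $M_1$ (that of $M_2$ being symmetric) I would argue by contradiction: if $z\in V(G_1)\setminus M_1$ had $M_1\cup\{z\}$ independent in $G_1$, then $z\notin\mathcal{M}$, so maximality of $\mathcal{M}$ yields a $\mathcal{G}$-neighbor $m\in\mathcal{M}$ of $z$; by (ii), $m\notin V(G_1)$ (else $m\in M_1$ and $zm\in E(G_1)$), so $m\in V(G_2)\setminus K$; Remark~\ref{intact} then forces $z\in K$; but $z\neq v$, so the clique $K$ of $G_1$ gives $zv\in E(G_1)$ with $v\in M_1$ — a contradiction in every case.

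The substantive difficulty is entirely in this converse, and specifically in the maximality of the restrictions $\mathcal{M}\cap V(G_i)$: a priori such a restriction could fail to be maximal, because $\mathcal{M}$ might dominate the gluing clique "from outside" both pieces. What rules this out is precisely that $K$ is a \emph{simplicial} clique of $\mathcal{G}$ — this is what pins $N_{\mathcal{G}}[q]$ down to $K$ and hence forces every MIS of $\mathcal{G}$ to contain a vertex of $K$ — so isolating this use of the hypothesis and disposing of the resulting subcase $z\in K$ is where the real work lies; everything else is routine bookkeeping with (i), (ii) and Remark~\ref{intact}.
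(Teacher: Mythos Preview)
Your proof is correct and follows essentially the same two-part strategy as the paper: show $M_1\cup M_2$ is a MIS via Remark~\ref{intact} and maximality of the $M_i$, then for the converse restrict an arbitrary MIS $\mathcal{M}$ to each piece, use simpliciality of $K$ to force $\mathcal{M}\cap K\neq\emptyset$, and verify the restrictions are themselves MISs. Your handling of the maximality of $M_1=\mathcal{M}\cap V(G_1)$ in the converse---pushing the hypothetical extender $z$ into $K$ via Remark~\ref{intact} and then contradicting with the edge $zv$---is in fact tidier than the paper's version, which splits on whether $I_2\cup\{y\}$ is independent without cleanly closing the dependent branch.
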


\begin{proof}
This proof consists of two parts: verifying that (I) $M_1 \cup M_2,$ as described in the hypothesis of the theorem, is a \emph{MIS} of $\mathcal{G}$ and that (II) all \emph{MIS}s of $\mathcal{G}$ are of this form. 

(I) Suppose, for a contradiction, that $M_1 \cup M_2$ is not a \emph{MIS} of $\mathcal{G}$. Then, either $M_1 \cup M_2$ is dependent or $M_1 \cup M_2$ is independent but not maximal.  In the first case, there exist $u,v \in M_1 \cup M_2$ such that $uv \in E(\mathcal{G}).$ There are two possibilities: (1) $u,v \in M_i,$ where $i=1,2$ or (2) $u \in M_1-M_2$ and $v \in M_2-M_1.$ But (1) implies that $uv \in E(G_i)$ so that $M_i$ is a dependent set of $G_i$, and (2) contradicts Remark \ref{intact}, since $M_1-M_2 \subset V(G_1) - V(G_1) \cap V(G_2)$ and $M_2 -M_1 \subset V(G_2) - V(G_1) \cap V(G_2).$ If $M_1 \cup M_2$ is independent but not maximal, there is $w \in V(\mathcal{G})$ such that $M_1 \cup M_2 \cup \{w\}$ is independent. Say, $M_1 \cup \{w\}$ is an independent set of $G_1.$ But then, $M_1$ is not maximal in $G_1.$ Therefore, $M_1 \cup M_2$ is a \emph{MIS} of $\mathcal{G}.$

(II) Any \emph{MIS} of $\mathcal{G}$ must contain vertices of $G_1$ and $G_2$. This is because $\mathcal{C}(G_1)$ and $\mathcal{C}(G_2)$ are non-empty, and any \emph{MIS} of any graph must contain a vertex per simplicial clique. Thus, any \emph{MIS} of $\mathcal{G}$ can be expressed as $I_1 \cup I_2$, where $I_1 \subset V(G_1)$ and $I_2 \subset V(G_2)$ are non-empty. Without loss of generality, suppose that $I_1$ is not a \emph{MIS} of $G_1.$ Clearly, if $I_1$ is dependent, so is $I_1 \cup I_2,$ so we discard that possibility. Suppose that $I_1$ is independent but not maximal in $G_1.$ Then, $I_1 \cup \{y\}$ is independent, for some $y \in V(G_1).$ But then, if $I_2 \cup \{y\}$ is independent, so is $I_1 \cup I_1 \cup \{y\}.$ Otherwise, $I_2 \cup \{y\}$ is dependent and so is $I_1 \cup I_2 \cup \{y\}.$ Thus, $I_1$ and $I_2$ must be \emph{MIS}s of $G_1$ and $G_2.$ By Remark \ref{intact}, the only vertices at which $I_1$ and $I_2$ can interesect belong to $V(G_1) \cap V(G_2).$ Observe that $I_1 \cap I_2$ can contain at most one vertex from $V(G_1) \cap V(G_2)$, since any two vertices in this set are adjacent. Observe too that if $I_1 \cap I_2 = \emptyset,$ then each vertex in $V(G_1) \cap V(G_2)$ must be adjacent to vertices outside of $V(G_1) \cap V(G_2).$ But this forces $V(G_1) \cap V(G_2)$ to be non-simplicial. Therefore, every \emph{MIS} of $\mathcal{G}$ is of the form $I_1 \cup I_2,$ where $I_1$ and $I_2$ are \emph{MIS}s of $G_1$ and $G_2$ that intersect at a single vertex from $V(G_1) \cap V(G_2).$
\end{proof}

\begin{corollary}
Let $\mathcal{G}$ be the \emph{SCS} of $G_1$ and $G_2$, and let $c=|V(G_1) \cap V(G_2)|$. Let $v_i \in V(G_1) \cap V(G_2),$ let $l_i$ be the number of \emph{MIS}s of $G_1$ that contain $v_i,$ and let $m_i$ be the number of \emph{MIS}s of $G_2$ that contain $v_i.$ Then, $\mathcal{G}$ has exactly $$\sum_{i=1}^c l_im_i$$ maximal independent sets. 
\end{corollary}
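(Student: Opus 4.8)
The plan is to turn Theorem \ref{bigmis} into an explicit bijection between the \emph{MIS}s of $\mathcal{G}$ and a family of pairs of \emph{MIS}s, and then to count that family by sorting on the single shared vertex. First I would record the two structural facts that make this work. On one hand, Theorem \ref{bigmis} says that the \emph{MIS}s of $\mathcal{G}$ are exactly the sets $M_1 \cup M_2$ with $M_1$ a \emph{MIS} of $G_1$, $M_2$ a \emph{MIS} of $G_2$, and $M_1 \cap M_2 = \{v\}$ for a single $v \in V(G_1) \cap V(G_2)$. On the other hand, since $V(G_1) \cap V(G_2)$ is a clique, any independent set of $G_1$ (or of $G_2$) meets $V(G_1) \cap V(G_2)$ in at most one vertex; hence if $M_1$ is a \emph{MIS} of $G_1$ and $M_2$ a \emph{MIS} of $G_2$ with $M_1 \cap M_2 \neq \emptyset$, then, because $M_1 \cap M_2 \subseteq V(G_1) \cap V(G_2)$, the intersection is automatically a single vertex of $V(G_1) \cap V(G_2)$. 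So the admissible pairs are precisely the pairs of \emph{MIS}s that intersect at all.

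Next I would verify that the map $(M_1, M_2) \mapsto M_1 \cup M_2$ is injective on this set of pairs. Given such a pair with $M_1 \cap M_2 = \{v\}$, Remark \ref{intact} (with $M_2 \subseteq V(G_2)$ and the clique observation above) forces $M_2 \cap V(G_1) = \{v\}$, and therefore $(M_1 \cup M_2) \cap V(G_1) = M_1$; symmetrically $(M_1 \cup M_2) \cap V(G_2) = M_2$. Thus $M_1$ and $M_2$ are recovered from $M_1 \cup M_2$, the map is injective, and by Theorem \ref{bigmis} it is a bijection onto the set of \emph{MIS}s of $\mathcal{G}$.

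Finally I would do the count. Partition the admissible pairs $(M_1,M_2)$ according to the unique vertex $v_i \in V(G_1) \cap V(G_2)$ lying in $M_1 \cap M_2$; this is a partition because that vertex is uniquely determined by the pair, and conversely, for a fixed $v_i$, any \emph{MIS} $M_1$ of $G_1$ with $v_i \in M_1$ and any \emph{MIS} $M_2$ of $G_2$ with $v_i \in M_2$ satisfy $M_1 \cap M_2 = \{v_i\}$ by the clique observation. Hence the block indexed by $v_i$ contains exactly $l_i m_i$ pairs, and summing over $i = 1, \dots, c$ yields $\sum_{i=1}^c l_i m_i$ \emph{MIS}s of $\mathcal{G}$. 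I do not expect a real obstacle here; the only points needing care are that distinct $v_i$ give disjoint blocks and that the union map is injective, i.e. that nothing is double-counted, and both follow from $V(G_1) \cap V(G_2)$ being a clique.
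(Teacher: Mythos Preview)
Your proposal is correct and follows the same approach as the paper's proof: invoke Theorem~\ref{bigmis} to describe each \emph{MIS} of $\mathcal{G}$ as $M_1\cup M_2$, then count by partitioning on the unique shared vertex $v_i\in V(G_1)\cap V(G_2)$. You are in fact more careful than the paper, which omits the injectivity check for $(M_1,M_2)\mapsto M_1\cup M_2$ and the verification that distinct $v_i$ give disjoint blocks; your use of Remark~\ref{intact} and the clique property of $V(G_1)\cap V(G_2)$ fills these gaps cleanly.
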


\begin{proof}
We know the structure of the \emph{MIS}s of $\mathcal{G}$ from Theorem \ref{bigmis}. Observe that per each of the $l_i$ \emph{MIS}s of $G_1,$ we can form $m_i$ \emph{MIS}s of $\mathcal{G}$ that contain $v_i.$ That is, we can form $l_im_i$ \emph{MIS}s of $\mathcal{G}$ that contain $v_i.$ Since there are $c$ such $v_i,$ the result follows. 
\end{proof}

\begin{theorem}\label{wcdimSCS}
Let $\mathcal{G}$ be the \emph{SCS} of $G_1$ and $G_2.$ Then, 
\[
wcdim(\mathcal{G})=wcdim(G_1)+wcdim(G_2)-1.
\]
\end{theorem}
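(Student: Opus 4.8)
The plan is to establish the two inequalities $wcdim(\mathcal{G}) \geq wcdim(G_1) + wcdim(G_2) - 1$ and $wcdim(\mathcal{G}) \leq wcdim(G_1) + wcdim(G_2) - 1$ separately, working with the identification of well-covered weightings with vectors in $\mathbb{F}^n$. Write $Q = V(G_1) \cap V(G_2)$, the shared simplicial clique. First I would fix notation: a weighting $f$ of $\mathcal{G}$ restricts to weightings $f_1 = f|_{V(G_1)}$ and $f_2 = f|_{V(G_2)}$, and these agree on $Q$. The bridge between the well-covered conditions on $\mathcal{G}$ and on the $G_i$ is Theorem \ref{bigmis}: every \emph{MIS} of $\mathcal{G}$ is $M_1 \cup M_2$ with $M_i$ an \emph{MIS} of $G_i$ and $M_1 \cap M_2 = \{v\}$ for a single $v \in Q$, so that $\sum_{w \in \mathcal{M}} f(w) = \sum_{w\in M_1} f_1(w) + \sum_{w\in M_2} f_2(w) - f(v)$.

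For the lower bound, I would show that if $f_1$ is a well-covered weighting of $G_1$ and $f_2$ is a well-covered weighting of $G_2$ that agree \emph{in value} on a chosen simplicial vertex $v_0 \in Q$ — more precisely, if $f_1$ and $f_2$ can be glued — then $f = f_1 \cup f_2$ is a well-covered weighting of $\mathcal{G}$. The point is that since $Q$ is a clique, any \emph{MIS} of $G_i$ meets $Q$ in exactly one vertex, and because $Q$ is simplicial in each $G_i$ one can further arrange (using Lemma \ref{lemmafconstant}, or by a direct \emph{MIS}-swapping argument) that a well-covered weighting is constant on the simplicial vertices of $Q$ — call these common constants $c_1$ and $c_2$. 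Gluing requires $c_1 = c_2$ and also requires the weightings to agree on all of $Q$; I would argue that on a simplicial clique the well-covered weighting is in fact determined on $Q$ by its value on $Q$'s simplicial vertices together with the global \emph{MIS}-sum constant, so the gluing condition is a single linear equation. This exhibits an injection from $\{(f_1, f_2) : f_1, f_2 \text{ well-covered, gluable}\}$, a space of dimension $wcdim(G_1) + wcdim(G_2) - 1$, into $\mathbb{V}(\mathcal{G})$.

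For the upper bound, I would go the other way: given a well-covered weighting $f$ of $\mathcal{G}$, show that $f_1$ and $f_2$ are well-covered weightings of $G_1$ and $G_2$. Fix an \emph{MIS} $M_2^0$ of $G_2$ meeting $Q$ at a simplicial vertex $v_0$; for any \emph{MIS} $M_1$ of $G_1$ through $v_0$, $M_1 \cup M_2^0$ is an \emph{MIS} of $\mathcal{G}$, so $\sum_{M_1} f_1$ is constant over all such $M_1$. Then vary the vertex of $Q$ and use that on the simplicial clique $Q$ the \emph{MIS}-sum of $G_1$ restricted to $\{$\emph{MIS}s through $w\}$ shifts by a controlled amount as $w$ ranges over $Q$ (again leveraging simpliciality of $Q$ and a neighbour-swap) to conclude $f_1$ is well-covered on all of $G_1$; symmetrically for $f_2$. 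This gives a linear map $\mathbb{V}(\mathcal{G}) \to \mathbb{V}(G_1) \times \mathbb{V}(G_2)$, $f \mapsto (f_1, f_2)$, whose image lands in the gluable pairs; the kernel consists of weightings vanishing on $V(G_1)$ and on $V(G_2)$, i.e. identically zero, so the map is injective and $wcdim(\mathcal{G}) \leq wcdim(G_1) + wcdim(G_2) - 1$.

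The main obstacle I anticipate is the careful handling of the shared clique $Q$ — specifically, proving that a well-covered weighting's behaviour on all of $Q$ (not just its simplicial vertices) is pinned down so that "gluable" is exactly a codimension-one condition, and that restriction genuinely produces well-covered weightings on each $G_i$ rather than merely weightings constant on \emph{MIS}s passing through a single fixed vertex. This is where Remark \ref{intact} (no edges between the private parts of $G_1$ and $G_2$) and the simpliciality of $Q$ in \emph{both} summands do the real work, and I would want to isolate a short lemma: for a graph $G$ with a simplicial clique $C$, the map sending a well-covered weighting to its values is compatible with "restricting attention to \emph{MIS}s through a fixed simplicial vertex of $C$." Everything else is bookkeeping with the dimension count $wcdim(G_1) + wcdim(G_2) - 1$ coming from the single overlap equation.
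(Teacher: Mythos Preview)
Your plan has a real gap, and it is precisely the obstacle you flag but do not resolve: the restriction $f|_{V(G_i)}$ of a well-covered weighting of $\mathcal{G}$ need \emph{not} be well-covered on $G_i$. The neighbour-swap you propose fails because a non-simplicial vertex $v\in Q$ can have neighbours outside $Q$; replacing $v$ by a simplicial $v_0$ in an \emph{MIS} $M_i$ of $G_i$ leaves those outside neighbours uncovered, so $(M_i\setminus\{v\})\cup\{v_0\}$ is typically not maximal, and after re-extending you lose control of the sum. Concretely: let $G_1$ have vertices $\{a,b,c,d,e\}$ and edges $ab,ac,bc,bd,ce$, take $Q=\{a,b,c\}$ (with $a$ simplicial), and let $G_2=Q\cong K_3$. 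Then $\mathcal{G}=G_1$ is a valid \emph{SCS}, and $f(a)=f(c)=f(e)=0$, $f(b)=f(d)=1$ is well-covered on $\mathcal{G}$ (all three \emph{MIS}-sums equal $1$), yet $f|_Q=(0,1,0)$ is not well-covered on $K_3$. The same example breaks your lower bound: well-covered weightings of $G_1$ need not be constant on $Q$, so ``gluable'' in your sense is not a codimension-one condition --- here $wcdim(G_1)=3$ but the subspace of weightings constant on $Q$ is only $1$-dimensional. Your appeal to Lemma~\ref{lemmafconstant} is also misplaced, since that lemma is proved only for \emph{SCCG}s, not for arbitrary $G_i$.

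The paper avoids all of this by working in the opposite direction. Instead of restricting from $\mathcal{G}$ to $G_i$, it \emph{extends by zero} from $G_i$ to $\mathcal{G}$: if $g$ is well-covered on $G_1$, set $g\equiv 0$ on $V(G_2)\setminus Q$; by Theorem~\ref{bigmis} every \emph{MIS} of $\mathcal{G}$ meets $V(G_2)\setminus Q$ only in $M_2\setminus\{v\}$, on which $g$ vanishes, so the extension is well-covered on $\mathcal{G}$ with the same constant. This yields subspaces $W_{G_1},W_{G_2}\subseteq\mathcal{V}(\mathcal{G})$ with $W_{G_i}\cong\mathbb{V}(G_i)$. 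Their intersection consists of weightings supported on $Q$, which (again via Theorem~\ref{bigmis}) must be constant on $Q$, hence $1$-dimensional; and the paper argues $W_{G_1}+W_{G_2}=\mathcal{V}(\mathcal{G})$ by splitting an arbitrary $f$ as a sum $g+h$ with $g\in W_{G_1}$, $h\in W_{G_2}$. The standard dimension formula for a sum of subspaces then gives the result. The key idea you are missing is that extension by zero trivially preserves well-coveredness (the extra vertices contribute nothing to any \emph{MIS}-sum), whereas restriction does not.
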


\begin{proof}
Let $\mathcal{V}$ be the well-covered space of $\mathcal{G},$ $f \in \mathcal{V}$ and $u \in V(G_1) \cap V(G_2).$ Define $W_{G_1}$ to be the vector space of all $f$ that assign zero to vertices outside of $V(G_1)$ with the added property that for any $g \in W_{G_1},$ $g(u)=\frac{f(u)}{2}.$ Similarly, define $W_{G_2}$ to be the vector space of all $f$ that assign zero to vertices outside of $V(G_2)$ and such that for any $h \in W_{G_2},$ $h(u)=\frac{f(u)}{2}.$ By Theorem \ref{bigmis}, we know what the \emph{MIS}s of $\mathcal{G}$ look like. Namely $\mathcal{M}=M_1 \cup M_2,$ where $M_1 \cap M_2 =\{s\}$ for some $s \in V(G_1) \cap V(G_2).$  Observe that all functions in $W_{G_1}$ and $W_{G_2}$ have domain $V(\mathcal{G}),$ which is the domain of $f,$ and thus addition of these functions is defined. Then, for any $g \in W_{G_1}$ and any $h \in W_{G_2},$

\begin{align*} 
\sum_{v \in \mathcal{M}} (g+h)(v) &= \sum_{v \in \mathcal{M}} g(v) + \sum_{v \in \mathcal{M}} h(v) \\ &= \bigg( \sum_{v \in \mathcal{M}-\{s\}} g(v) + g(s) \bigg) + \bigg(\sum_{v \in \mathcal{M}-\{s\}} h(v) + h(s) \bigg) \\ &= \sum_{v \in \mathcal{M}-\{s\}} g(v) + \sum_{v \in \mathcal{M}-\{s\}} h(v) + f(s) \\ &= \sum_{v \in \mathcal{M}} f(v).
\end{align*}

This shows that $W_{G_1} + W_{G_2} = \mathcal{V}$. Since $W_{G_1} \cap W_{G_2}$ is the set of all $f$ that are zero everywhere, except at $V(G_1) \cap V(G_2)$, where they are constant, we get that $dim(\mathcal{V})=dim(W_{G_1})+dim(W_{G_2})-1$, and the result follows.
\end{proof}

Now we are finally able to state and prove our result generalizing Brown and Nowakowski's theorem on the well-covered dimension of chordal graphs.

\begin{theorem}\label{mainthm}
Let $\mathcal{G}$ be the \emph{SCS} of $G_1,$ a \emph{SCCG}, and $G_2,$ a chordal graph. Then, $wcdim(\mathcal{G})=sc(\mathcal{G})$.
\end{theorem}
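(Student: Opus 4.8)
The plan is to combine Theorem~\ref{wcdimSCS} with Theorem~\ref{thmGabriella} and Brown and Nowakowski's theorem, reducing everything to a bookkeeping identity among simplicial clique numbers. First I would apply Theorem~\ref{wcdimSCS} to write $wcdim(\mathcal{G}) = wcdim(G_1) + wcdim(G_2) - 1$. Since $G_1$ is a \emph{SCCG}, Theorem~\ref{thmGabriella} gives $wcdim(G_1) = sc(G_1)$; since $G_2$ is chordal, Brown and Nowakowski's theorem (cited in the introduction) gives $wcdim(G_2) = sc(G_2)$. So it suffices to prove the purely combinatorial identity
\[
sc(\mathcal{G}) = sc(G_1) + sc(G_2) - 1.
\]

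The heart of the argument is therefore understanding how the simplicial cliques of $\mathcal{G}$ relate to those of $G_1$ and $G_2$. Let $D := V(G_1) \cap V(G_2)$, which by Definition~\ref{defsum}(3) is a simplicial clique of each of $G_1$, $G_2$, and $\mathcal{G}$. The key structural claim I would establish is: every simplicial clique of $\mathcal{G}$ is either a simplicial clique of $G_1$ or a simplicial clique of $G_2$, and the only simplicial clique counted in both is $D$ itself. The ``forward'' direction uses Remark~\ref{intact}: a vertex $u$ outside $D$ lying in (say) $V(G_1)$ has $N_{\mathcal{G}}[u] = N_{G_1}[u]$, because no edges of $\mathcal{G}$ join $V(G_1)\setminus D$ to $V(G_2)\setminus D$; hence $u$ is simplicial in $\mathcal{G}$ iff it is simplicial in $G_1$, and its simplicial clique $N_{\mathcal{G}}[u]$ lies entirely in $G_1$. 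For the clique $D$: since $D$ is simplicial in $G_1$ (resp.\ $G_2$), it contains a simplicial vertex there, and one checks that clique is still simplicial in $\mathcal{G}$. Conversely, a simplicial clique of $G_1$ other than $D$ has its simplicial vertex outside $D$ (a simplicial vertex of $G_1$ lying in $D$ would, since $N_{G_1}[v]$ is maximal, force $N_{G_1}[v] = D$), and by the same neighborhood-preservation argument that vertex stays simplicial in $\mathcal{G}$ with the same closed neighborhood. The overlap is exactly $\{D\}$: any simplicial clique of $\mathcal{G}$ containing a vertex of $V(G_1)\setminus D$ is contained in $G_1$ and disjoint from $V(G_2)\setminus D$, so it cannot also be a simplicial clique of $G_2$ unless it equals a clique inside $D$, i.e.\ $D$ (by maximality). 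Counting then gives $sc(\mathcal{G}) = (sc(G_1) - 1) + (sc(G_2) - 1) + 1 = sc(G_1) + sc(G_2) - 1$, which closes the chain of equalities.

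I expect the main obstacle to be the careful verification that "simplicial in $G_i$" is equivalent to "simplicial in $\mathcal{G}$" for vertices and cliques, including the boundary behavior at $D$: one must rule out that passing to $\mathcal{G}$ destroys the maximality of a clique $N_{G_1}[v]$ (it cannot, since adding $G_2$ adds no neighbors to $v \notin D$), and one must confirm that $D$, which may have been maximal in each $G_i$ for possibly different reasons, remains a \emph{maximal} clique and still contains a simplicial vertex in $\mathcal{G}$. A subtle point worth stating explicitly is why a simplicial vertex of $G_1$ not witnessing $D$ cannot have its closed neighborhood meet $V(G_2)\setminus D$ — this is precisely Remark~\ref{intact} — and why the simplicial vertices of $D$ in $G_1$ versus in $G_2$ don't lead to double counting beyond the single clique $D$. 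Once this local analysis is pinned down, the rest is immediate arithmetic, so I would front-load the structural lemma and keep the final assembly to one line.
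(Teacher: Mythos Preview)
Your proposal is correct and follows essentially the same route as the paper: apply Theorem~\ref{wcdimSCS}, then Theorem~\ref{thmGabriella} and Brown--Nowakowski to rewrite each $wcdim(G_i)$ as $sc(G_i)$, and finish with the identity $sc(\mathcal{G})=sc(G_1)+sc(G_2)-1$. The paper's proof is a single line that simply asserts this last equality, whereas you spell out the structural reason (via Remark~\ref{intact} and the behavior at $D$) why the simplicial cliques of $\mathcal{G}$ are exactly those of $G_1$ and $G_2$ with $D$ counted once; this extra justification is a welcome addition rather than a departure.
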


\begin{proof}
By Theorem \ref{wcdimSCS}, $wcdim(\mathcal{G})=sc(G_1)+sc(G_2)-1=sc(\mathcal{G})$.
\end{proof}

The question is now whether there are any graphs that have minimal well-covered dimension and are not \emph{SCS} graphs. In order to answer this question, we turn to the study of the well-covered dimension of Sierpinski gasket graphs, of which there are infinitely many. The Sierpinski gasket graphs are not part of the family of graphs to which Theorem \ref{mainthm} applies. In spite of this, all Sierpinski gasket graphs have well-covered dimension the simplicial clique number. 

\section{The well-covered dimension of Sierpinski gasket graphs}

In this section, we study the well-covered dimension of the Sierpinski gasket graph, which we denote by $S_n$, for any $n \in \mathbb{N}$. The Sierpinski gasket graph is constructed recursively, in the same way the Sierpinski gasket is constructed. \\

\begin{center}
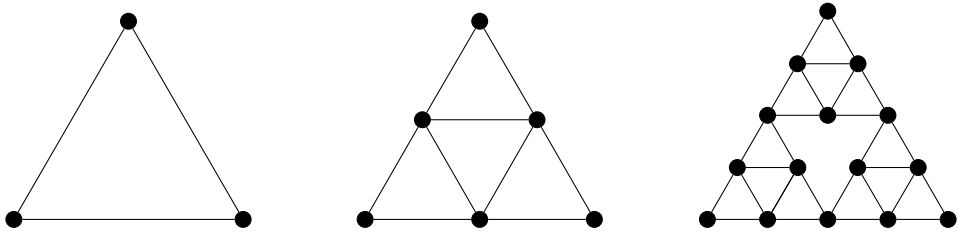
\begin{figure}[H]
{\begin{tikzpicture}[x=.6in, y=.6in]
\vertex (w1) at (0,0) [label=below:$$, style={fill=black}]{};
\vertex (w2) at (2,0)[label=below:$$, style={fill=black}]{};
\vertex (w3) at (1,1.73)[label=below:$$, style={fill=black}]{};
\path
(w1) edge (w2)
(w2) edge (w3)
(w1) edge (w3);
\end{tikzpicture}}\hfill{
\begin{tikzpicture}[x=.6in, y=.6in]
\vertex (w1) at (0,0) [label=below:$$, style={fill=black}]{};
\vertex (w2) at (1,0)[label=below:$$, style={fill=black}]{};
\vertex (w3) at (2,0)[label=below:$$, style={fill=black}]{};
\vertex (w4) at (0.5,0.87)[label=below:$$, style={fill=black}]{};
\vertex (w5) at (1.5,0.87)[label=below:$$, style={fill=black}]{};
\vertex (w6) at (1,1.73)[label=below:$$, style={fill=black}]{};
\path 
(w1) edge (w2)
(w2) edge (w3)
(w1) edge (w4)
(w4) edge (w5)
(w2) edge (w5)
(w2) edge (w4)
(w3) edge (w5)
(w4) edge (w6)
(w5) edge (w6);
\end{tikzpicture}}\hfill{\begin{tikzpicture}[x=.18in, y=.18in]
\vertex (w1) at (0,0) [label=below:$$, style={fill=black}]{};
\vertex (w2) at (7,0)[label=below:$$, style={fill=black}]{};
\vertex (w3) at (3.5,6.06)[label=below:$$, style={fill=black}]{};
\vertex (w4) at (3.5,0)[label=below:$$, style={fill=black}]{};
\vertex (w5) at (1.75,3.03)[label=below:$$, style={fill=black}]{};
\vertex (w6) at (5.25,3.03)[label=below:$$, style={fill=black}]{};
\vertex (w7) at (1.75,0)[label=below:$$, style={fill=black}]{};
\vertex (w8) at (5.25,0)[label=below:$$, style={fill=black}]{};
\vertex (w9) at (0.87,1.51)[label=below:$$, style={fill=black}]{};
\vertex (w10) at (6.13,1.51)[label=below:$$, style={fill=black}]{};
\vertex (w11) at (2.63,1.51)[label=below:$$, style={fill=black}]{};
\vertex (w12) at (4.37,1.51)[label=below:$$, style={fill=black}]{};
\vertex (w13) at (3.5,3.03)[label=below:$$, style={fill=black}]{};
\vertex (w14) at (2.62,4.53)[label=below:$$, style={fill=black}]{};
\vertex (w15) at (4.38,4.53)[label=below:$$, style={fill=black}]{};
\path 
(w1) edge (w7)
(w4) edge (w7)
(w4) edge (w8)
(w8) edge (w2)
(w1) edge (w9)
(w9) edge (w7)
(w11) edge (w7)
(w11) edge (w4)
(w11) edge (w7)
(w4) edge (w12)
(w8) edge (w12)
(w8) edge (w10)
(w2) edge (w10)
(w9) edge (w11)
(w12) edge (w10)
(w9) edge (w5)
(w5) edge (w11)
(w12) edge (w6)
(w6) edge (w10)
(w5) edge (w13)
(w13) edge (w6)
(w5) edge (w14)
(w14) edge (w13)
(w14) edge (w15)
(w15) edge (w13)
(w6) edge (w15)
(w14) edge (w3)
(w3) edge (w15);
\end{tikzpicture}}
\caption{The first three Sierpinski graphs, $S_1,S_2$ and $S_3$.}
\label{fig1}
\end{figure}
\end{center}

From Figure \ref{fig1}, it should be clear that $S_1$ is a $K_3$ and that $S_2$ is both a \emph{SCCG} and a chordal graph with $sc(S_2)=3.$ Thus, $wcdim(S_1)=1$ and $wcdim(S_2)=3.$ However, for $n \in \mathbb{N}_{\geq 3}$, $S_n$ is not a \emph{SCCG}, not a chordal graph, and not the \emph{SCS} of a chordal graph and a \emph{SCCG}. All we know about the well-covered dimension of $S_n$ for $n \in \mathbb{N}_{\geq 3}$ is stated below and is a direct consequence of Lemma 10 in \cite{BN}.

\begin{remark}\label{LBOUND}
For $n \in \mathbb{N}_{\geq3},$  $wcdim(S_n) \geq 3.$  
\end{remark}

From the recursive construction of $S_n$, it follows that $S_n$ has all of its predecessors as subgraphs. In particular, $S_n$ has exactly three $S_{n-1}$ subgraphs. Observe that for $n \in \mathbb{N}_{\geq 3},$ $S_n$ has \emph{sides} that are paths of length at least $5,$ and that an $S_{n-1}$ subgraph of $S_n$ has \emph{corners} that are the \emph{former simplicial cliques} of the $S_{n-1}$ iterate. An $S_{n-1}$ subgraph of $S_n$ has three corners, only one of which is a simplicial clique of $S_n.$ These ideas are exemplified in
Figure \ref{B}. In the graph to the left, a \emph{side} of $S_4$ is colored yellow and an $S_3$ subgraph of $S_4$ is colored blue. In the graph to the right, the simplicial clique corner of an $S_3$ subgraph of $S_4$ is colored red.

\begin{center}
\begin{figure}[H]
{\begin{tikzpicture}[x=.26in, y=.26in]
\vertex (w1) at (0,0)[label=below:$$, style={fill=yellow}]{};
\vertex (w2) at (7,0)[label=below:$$, style={fill=blue}]{};
\vertex (w3) at (3.5,6.06)[label=below:$$, style={fill=yellow}]{};
\vertex (w4) at (3.5,0)[label=below:$$, style={fill=blue}]{};
\vertex (w5) at (1.75,3.03)[label=below:$$, style={fill=yellow}]{};
\vertex (w6) at (5.25,3.03)[label=below:$$, style={fill=blue}]{};
\vertex (w7) at (1.75,0)[label=below:$$, style={fill=black}]{};
\vertex (w8) at (5.25,0)[label=below:$$, style={fill=blue}]{};
\vertex (w9) at (0.87,1.51)[label=below:$$, style={fill=yellow}]{};
\vertex (w10) at (6.13,1.51)[label=below:$$, style={fill=blue}]{};
\vertex (w11) at (2.63,1.51)[label=right:$$, style={fill=black}]{};
\vertex (w12) at (4.37,1.51)[label=below:$$, style={fill=blue}]{};
\vertex (w13) at (2.62,4.53)[label=below:$$, style={fill=yellow}]{};
\vertex (w14) at (4.38,4.53)[label=below:$$, style={fill=black}]{};
\vertex (w15) at (3.49,3.03)[label=below:$$, style={fill=black}]{};
\vertex (w16) at (2.62,3.03)[label=below:$$, style={fill=black}]{};
\vertex (w17) at (4.38,3.03)[label=below:$$, style={fill=black}]{};
\vertex (w18) at (.87, 0)[label=below:$$, style={fill=black}]{};
\vertex (w19) at (2.62,0)[label=below:$$, style={fill=black}]{};
\vertex (w20) at (4.38,0)[label=below:$$, style={fill=blue}]{};
\vertex (w21) at (6.12,0)[label=below:$$, style={fill=blue}]{};
\vertex (w22) at (0.435,0.753)[label=below:$$, style={fill=yellow}]{};
\vertex (w23) at (1.305,0.753)[label=below:$$, style={fill=black}]{};
\vertex (w24) at (2.175,0.753)[label=below:$$, style={fill=black}]{};
\vertex (w25) at (3.045,0.753)[label=below:$$, style={fill=black}]{};
\vertex (w26) at (3.915,0.753)[label=below:$$, style={fill=blue}]{};
\vertex (w27) at (4.785,0.753)[label=below:$$, style={fill=blue}]{};
\vertex (w28) at (5.655,0.753)[label=below:$$, style={fill=blue}]{};
\vertex (w29) at (6.525,0.753)[label=below:$$, style={fill=blue}]{};
\vertex (w30) at (1.75,1.51)[label=below:$$, style={fill=black}]{};
\vertex (w31) at (5.25,1.51)[label=below:$$, style={fill=blue}]{};
\vertex (w32) at (1.305,2.26)[label=below:$$, style={fill=yellow}]{};
\vertex (w33) at (2.175,2.26)[label=right:$$, style={fill=black}]{};
\vertex (w34) at (4.805,2.26)[label=below:$$, style={fill=blue}]{};
\vertex (w35) at (5.675,2.26)[label=below:$$, style={fill=blue}]{};
\vertex (w36) at (2.185,3.78)[label=below:$$, style={fill=yellow}]{};
\vertex (w37) at (3.055,3.78)[label=below:$$, style={fill=black}]{};
\vertex (w38) at (3.925,3.78)[label=below:$$, style={fill=black}]{};
\vertex (w39) at (4.795,3.78)[label=below:$$, style={fill=black}]{};
\vertex (w40) at (3.49,4.53)[label=below:$$, style={fill=black}]{};
\vertex (w41) at (3.055,5.28)[label=below:$$, style={fill=yellow}]{};
\vertex (w42) at (3.925,5.28)[label=below:$$, style={fill=black}]{};
\path
(w1) edge (w18)
(w18) edge (w7)
(w7) edge (w19)
(w19) edge (w4)
(w4) edge (w20)
(w20) edge (w8)
(w8) edge (w21)
(w21) edge (w2)
(w1) edge (w22)
(w22) edge (w23)
(w23) edge (w7)
(w7) edge (w24)
(w24) edge (w19)
(w19) edge (w25)
(w25) edge (w4)
(w4) edge (w26)
(w26) edge (w20)
(w20) edge (w27)
(w27) edge (w8)
(w28) edge (w8)
(w28) edge (w21)
(w21) edge (w29)
(w29) edge (w2)
(w18) edge (w22)
(w18) edge (w23)
(w22) edge (w9)
(w23) edge (w9)
(w9) edge (w32)
(w32) edge (w5)
(w5) edge (w36)
(w36) edge (w13)
(w13) edge (w36)
(w13) edge (w41)
(w41) edge (w3)
(w3) edge (w42)
(w14) edge (w42)
(w14) edge (w39)
(w39) edge (w6)
(w6) edge (w35)
(w35) edge (w10)
(w10) edge (w29)
(w32) edge (w30)
(w9) edge (w30)
(w11) edge (w30)
(w11) edge (w24)
(w30) edge (w33)
(w33) edge (w11)
(w32) edge (w33)
(w5) edge (w33)
(w11) edge (w25)
(w24) edge (w25)
(w26) edge (w12)
(w12) edge (w34)
(w34) edge (w6)
(w34) edge (w35)
(w12) edge (w27)
(w26) edge (w27)
(w12) edge (w31)
(w31) edge (w10)
(w10) edge (w28)
(w28) edge (w29)
(w31) edge (w34)
(w31) edge (w35)
(w6) edge (w17)
(w17) edge (w15)
(w15) edge (w16)
(w16) edge (w5)
(w36) edge (w16)
(w16) edge (w37)
(w37) edge (w15)
(w36) edge (w37)
(w13) edge (w37)
(w13) edge (w40)
(w40) edge (w14)
(w14) edge (w38)
(w38) edge (w15)
(w38) edge (w17)
(w17) edge (w39)
(w38) edge (w39)
(w41) edge (w40)
(w41) edge (w42)
(w40) edge (w42);
\end{tikzpicture}}\hspace{.2in}{\begin{tikzpicture}[x=.26in, y=.26in]
\vertex (w1) at (0,0)[label=below:$$, style={fill=yellow}]{};
\vertex (w2) at (7,0)[label=below:$$, style={fill=red}]{};
\vertex (w3) at (3.5,6.06)[label=below:$$, style={fill=yellow}]{};
\vertex (w4) at (3.5,0)[label=below:$$, style={fill=blue}]{};
\vertex (w5) at (1.75,3.03)[label=below:$$, style={fill=yellow}]{};
\vertex (w6) at (5.25,3.03)[label=below:$$, style={fill=blue}]{};
\vertex (w7) at (1.75,0)[label=below:$$, style={fill=black}]{};
\vertex (w8) at (5.25,0)[label=below:$$, style={fill=blue}]{};
\vertex (w9) at (0.87,1.51)[label=below:$$, style={fill=yellow}]{};
\vertex (w10) at (6.13,1.51)[label=below:$$, style={fill=blue}]{};
\vertex (w11) at (2.63,1.51)[label=right:$$, style={fill=black}]{};
\vertex (w12) at (4.37,1.51)[label=below:$$, style={fill=blue}]{};
\vertex (w13) at (2.62,4.53)[label=below:$$, style={fill=yellow}]{};
\vertex (w14) at (4.38,4.53)[label=below:$$, style={fill=black}]{};
\vertex (w15) at (3.49,3.03)[label=below:$$, style={fill=black}]{};
\vertex (w16) at (2.62,3.03)[label=below:$$, style={fill=black}]{};
\vertex (w17) at (4.38,3.03)[label=below:$$, style={fill=black}]{};
\vertex (w18) at (.87, 0)[label=below:$$, style={fill=black}]{};
\vertex (w19) at (2.62,0)[label=below:$$, style={fill=black}]{};
\vertex (w20) at (4.38,0)[label=below:$$, style={fill=blue}]{};
\vertex (w21) at (6.12,0)[label=below:$$, style={fill=red}]{};
\vertex (w22) at (0.435,0.753)[label=below:$$, style={fill=yellow}]{};
\vertex (w23) at (1.305,0.753)[label=below:$$, style={fill=black}]{};
\vertex (w24) at (2.175,0.753)[label=below:$$, style={fill=black}]{};
\vertex (w25) at (3.045,0.753)[label=below:$$, style={fill=black}]{};
\vertex (w26) at (3.915,0.753)[label=below:$$, style={fill=blue}]{};
\vertex (w27) at (4.785,0.753)[label=below:$$, style={fill=blue}]{};
\vertex (w28) at (5.655,0.753)[label=below:$$, style={fill=blue}]{};
\vertex (w29) at (6.525,0.753)[label=below:$$, style={fill=red}]{};
\vertex (w30) at (1.75,1.51)[label=below:$$, style={fill=black}]{};
\vertex (w31) at (5.25,1.51)[label=below:$$, style={fill=blue}]{};
\vertex (w32) at (1.305,2.26)[label=below:$$, style={fill=yellow}]{};
\vertex (w33) at (2.175,2.26)[label=right:$$, style={fill=black}]{};
\vertex (w34) at (4.805,2.26)[label=below:$$, style={fill=blue}]{};
\vertex (w35) at (5.675,2.26)[label=below:$$, style={fill=blue}]{};
\vertex (w36) at (2.185,3.78)[label=below:$$, style={fill=yellow}]{};
\vertex (w37) at (3.055,3.78)[label=below:$$, style={fill=black}]{};
\vertex (w38) at (3.925,3.78)[label=below:$$, style={fill=black}]{};
\vertex (w39) at (4.795,3.78)[label=below:$$, style={fill=black}]{};
\vertex (w40) at (3.49,4.53)[label=below:$$, style={fill=black}]{};
\vertex (w41) at (3.055,5.28)[label=below:$$, style={fill=yellow}]{};
\vertex (w42) at (3.925,5.28)[label=below:$$, style={fill=black}]{};
\path
(w1) edge (w18)
(w18) edge (w7)
(w7) edge (w19)
(w19) edge (w4)
(w4) edge (w20)
(w20) edge (w8)
(w8) edge (w21)
(w21) edge (w2)
(w1) edge (w22)
(w22) edge (w23)
(w23) edge (w7)
(w7) edge (w24)
(w24) edge (w19)
(w19) edge (w25)
(w25) edge (w4)
(w4) edge (w26)
(w26) edge (w20)
(w20) edge (w27)
(w27) edge (w8)
(w28) edge (w8)
(w28) edge (w21)
(w21) edge (w29)
(w29) edge (w2)
(w18) edge (w22)
(w18) edge (w23)
(w22) edge (w9)
(w23) edge (w9)
(w9) edge (w32)
(w32) edge (w5)
(w5) edge (w36)
(w36) edge (w13)
(w13) edge (w36)
(w13) edge (w41)
(w41) edge (w3)
(w3) edge (w42)
(w14) edge (w42)
(w14) edge (w39)
(w39) edge (w6)
(w6) edge (w35)
(w35) edge (w10)
(w10) edge (w29)
(w32) edge (w30)
(w9) edge (w30)
(w11) edge (w30)
(w11) edge (w24)
(w30) edge (w33)
(w33) edge (w11)
(w32) edge (w33)
(w5) edge (w33)
(w11) edge (w25)
(w24) edge (w25)
(w26) edge (w12)
(w12) edge (w34)
(w34) edge (w6)
(w34) edge (w35)
(w12) edge (w27)
(w26) edge (w27)
(w12) edge (w31)
(w31) edge (w10)
(w10) edge (w28)
(w28) edge (w29)
(w31) edge (w34)
(w31) edge (w35)
(w6) edge (w17)
(w17) edge (w15)
(w15) edge (w16)
(w16) edge (w5)
(w36) edge (w16)
(w16) edge (w37)
(w37) edge (w15)
(w36) edge (w37)
(w13) edge (w37)
(w13) edge (w40)
(w40) edge (w14)
(w14) edge (w38)
(w38) edge (w15)
(w38) edge (w17)
(w17) edge (w39)
(w38) edge (w39)
(w41) edge (w40)
(w41) edge (w42)
(w40) edge (w42);
\end{tikzpicture}}
\caption{Subgraphs, sides and corners of $S_4.$}
\label{B}
\end{figure}
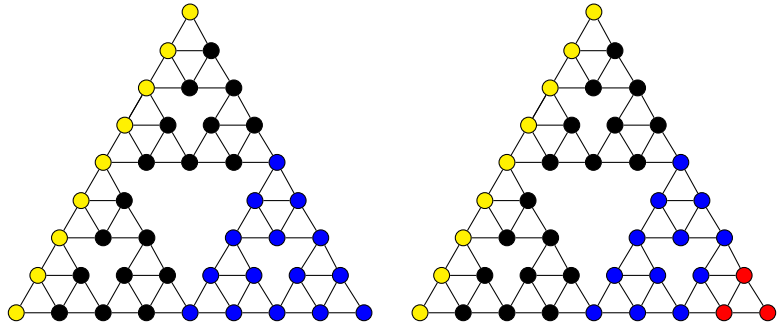
\end{center}

For any $n \in \mathbb{N}$, $|V(S_n)|=\frac{3(3^{n-1}+1)}{2}$ (see \cite{S}). For any $n \in \mathbb{N}_{\geq 3}$, $V(S_n)-\bigcup_{i=1}^3 C_i$ is the set of all vertices that do not belong to a simplicial clique of $S_n$. Note that $V(S_n)-\bigcup_{i=1}^3 C_i \neq \emptyset$ because when $n \in \mathbb{N}_{\geq 3}$, $|\bigcup_{i=1}^3 C_i|=9$ and $|V(S_n)| \geq 15$. The figure that follows shows $V(S_3)-\bigcup_{i=1}^3 C_i$ in blue.

\begin{center}
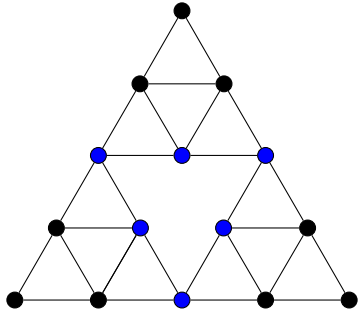
\begin{figure}[H]
\begin{tikzpicture}[x=.25in, y=.25in]
\vertex (w1) at (0,0)[label=below:$$, style={fill=black}]{};
\vertex (w2) at (7,0)[label=below:$$, style={fill=black}]{};
\vertex (w3) at (3.5,6.06)[label=below:$$, style={fill=black}]{};
\vertex (w4) at (3.5,0)[label=below:$$, style={fill=blue}]{};
\vertex (w5) at (1.75,3.03)[label=below:$$, style={fill=blue}]{};
\vertex (w6) at (5.25,3.03)[label=below:$$, style={fill=blue}]{};
\vertex (w7) at (1.75,0)[label=below:$$, style={fill=black}]{};
\vertex (w8) at (5.25,0)[label=below:$$, style={fill=black}]{};
\vertex (w9) at (0.87,1.51)[label=below:$$, style={fill=black}]{};
\vertex (w10) at (6.13,1.51)[label=below:$$, style={fill=black}]{};
\vertex (w11) at (2.63,1.51)[label=below:$$, style={fill=blue}]{};
\vertex (w12) at (4.37,1.51)[label=below:$$, style={fill=blue}]{};
\vertex (w13) at (3.5,3.03)[label=below:$$, style={fill=blue}]{};
\vertex (w14) at (2.62,4.53)[label=below:$$, style={fill=black}]{};
\vertex (w15) at (4.38,4.53)[label=below:$$, style={fill=black}]{};
\path 
(w1) edge (w7)
(w4) edge (w7)
(w4) edge (w8)
(w8) edge (w2)
(w1) edge (w9)
(w9) edge (w7)
(w11) edge (w7)
(w11) edge (w4)
(w11) edge (w7)
(w4) edge (w12)
(w8) edge (w12)
(w8) edge (w10)
(w2) edge (w10)
(w9) edge (w11)
(w12) edge (w10)
(w9) edge (w5)
(w5) edge (w11)
(w12) edge (w6)
(w6) edge (w10)
(w5) edge (w13)
(w13) edge (w6)
(w5) edge (w14)
(w14) edge (w13)
(w14) edge (w15)
(w15) edge (w13)
(w6) edge (w15)
(w14) edge (w3)
(w3) edge (w15);
\end{tikzpicture}
\caption{The set of all vertices not in a simplicial clique of $S_3$.}
\end{figure}
\end{center}

Moreover, if $w \in V(S_n)$ is non-simplicial, then $|N(w)|=4$. Otherwise, $w \in V(S_n)$ is simplicial and $|N(w)|=2$. In the figure below, we show the simplicial vertices of $S_1, S_2$ and $S_3$ in red, the neighborhood of a simplicial vertex in blue, and that of a non-simplicial vertex in green. 

\begin{center}
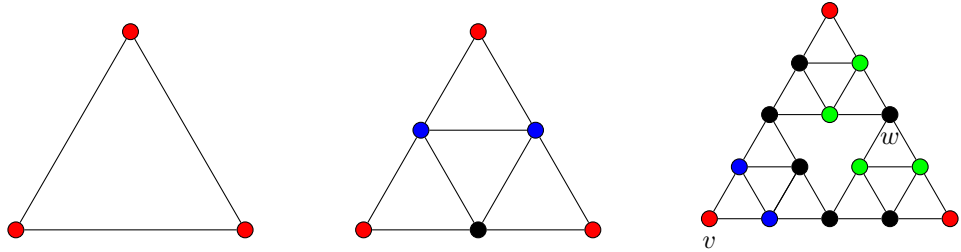
\begin{figure}[H]
{\begin{tikzpicture}[x=.6in, y=.6in]
\vertex (w1) at (0,0) [label=below:$$, style={fill=red}]{};
\vertex (w2) at (2,0)[label=below:$$, style={fill=red}]{};
\vertex (w3) at (1,1.73)[label=below:$$, style={fill=red}]{};
\path
(w1) edge (w2)
(w2) edge (w3)
(w1) edge (w3);
\end{tikzpicture}}\hfill{
\begin{tikzpicture}[x=.6in, y=.6in]
\vertex (w1) at (0,0) [label=below:$$, style={fill=red}]{};
\vertex (w2) at (1,0)[label=below:$$, style={fill=black}]{};
\vertex (w3) at (2,0)[label=below:$$, style={fill=red}]{};
\vertex (w4) at (0.5,0.87)[label=below:$$, style={fill=blue}]{};
\vertex (w5) at (1.5,0.87)[label=below:$$, style={fill=blue}]{};
\vertex (w6) at (1,1.73)[label=below:$$, style={fill=red}]{};
\path 
(w1) edge (w2)
(w2) edge (w3)
(w1) edge (w4)
(w4) edge (w5)
(w2) edge (w5)
(w2) edge (w4)
(w3) edge (w5)
(w4) edge (w6)
(w5) edge (w6);
\end{tikzpicture}}\hfill{\begin{tikzpicture}[x=.18in, y=.18in]
\vertex (w1) at (0,0)[label=below:$v$, style={fill=red}]{};
\vertex (w2) at (7,0)[label=below:$$, style={fill=red}]{};
\vertex (w3) at (3.5,6.06)[label=below:$$, style={fill=red}]{};
\vertex (w4) at (3.5,0)[label=below:$$, style={fill=black}]{};
\vertex (w5) at (1.75,3.03)[label=below:$$, style={fill=black}]{};
\vertex (w6) at (5.25,3.03)[label=below:$w$, style={fill=black}]{};
\vertex (w7) at (1.75,0)[label=below:$$, style={fill=blue}]{};
\vertex (w8) at (5.25,0)[label=below:$$, style={fill=black}]{};
\vertex (w9) at (0.87,1.51)[label=below:$$, style={fill=blue}]{};
\vertex (w10) at (6.13,1.51)[label=below:$$, style={fill=green}]{};
\vertex (w11) at (2.63,1.51)[label=below:$$, style={fill=black}]{};
\vertex (w12) at (4.37,1.51)[label=below:$$, style={fill=green}]{};
\vertex (w13) at (3.5,3.03)[label=below:$$, style={fill=green}]{};
\vertex (w14) at (2.62,4.53)[label=below:$$, style={fill=black}]{};
\vertex (w15) at (4.38,4.53)[label=below:$$, style={fill=green}]{};
\path 
(w1) edge (w7)
(w4) edge (w7)
(w4) edge (w8)
(w8) edge (w2)
(w1) edge (w9)
(w9) edge (w7)
(w11) edge (w7)
(w11) edge (w4)
(w11) edge (w7)
(w4) edge (w12)
(w8) edge (w12)
(w8) edge (w10)
(w2) edge (w10)
(w9) edge (w11)
(w12) edge (w10)
(w9) edge (w5)
(w5) edge (w11)
(w12) edge (w6)
(w6) edge (w10)
(w5) edge (w13)
(w13) edge (w6)
(w5) edge (w14)
(w14) edge (w13)
(w14) edge (w15)
(w15) edge (w13)
(w6) edge (w15)
(w14) edge (w3)
(w3) edge (w15);
\end{tikzpicture}}
\caption{Simplicial vertices and neighborhoods.}
\label{fig2}
\end{figure}
\end{center}

Let $n \in \mathbb{N}_{\geq 3}$ and $u,v$ be a pair of non-simplicial vertices that are adjacent. It is always the case that either $(1)$ $u$ and $v$ are adjacent to a unique third vertex $a$ or that $(2)$ $u$ and $v$ are adjacent to exactly two vertices $b$ and $c.$ This means that in case $(1),$ $|N[u] \cap N[v]|=|\{u,v,a\}|,$ while in case of $(2),$ $|N[u] \cap N[v]|=|\{u,v,b,c\}|.$ As a result, $6 \leq |N[\{u,v\}]| \leq 7,$ which shows that $V(S_n)-N[\{u,v\}] \neq \emptyset,$ since $|V(S_n)-N[\{u,v\}]| \geq 8.$ Additionally, $N[\{u,v\}]-\{u,v\} \neq \emptyset,$ since $|N[\{u,v\}]-\{u,v\}| \geq 4.$ 

\begin{lemma}[Lemma 9 in \cite{BN}]\label{RMKKBIR}
Let $u$ and $v$ be vertices of a graph $G.$ Suppose that $I$ is an independent set, and that $I \cup \{u\}$ and $I \cup \{v\}$ are \emph{MIS}s of $G.$ Then, for any well-covered weighting $f$ of $G,$ $f(u)=f(v)$.
\end{lemma}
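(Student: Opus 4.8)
The plan is to read the conclusion straight off the definition of a well-covered weighting: the hypothesis hands us two maximal independent sets that differ in exactly one vertex, and a well-covered weighting must assign them the same total weight, so the two ``swapped'' vertices must carry equal weight.

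Concretely, I would fix a well-covered weighting $f$ of $G$ and let $c\in\mathbb{F}$ be the common value with $\sum_{x\in\mathcal{M}}f(x)=c$ for every \emph{MIS} $\mathcal{M}$ of $G$; such a constant exists by definition. I first note that we may assume $u\notin I$ and $v\notin I$: if $u\in I$ then $I=I\cup\{u\}$ is already a \emph{MIS}, and since $I\subseteq I\cup\{v\}$ with the latter independent, maximality of $I$ forces $v\in I$ and $I\cup\{v\}=I\cup\{u\}$, so the two displayed \emph{MIS}s coincide and there is nothing to extract; thus in the relevant case both $u$ and $v$ lie outside $I$. Then I would simply evaluate the well-coveredness identity on the two given \emph{MIS}s:
$$c=\sum_{x\in I\cup\{u\}}f(x)=f(u)+\sum_{x\in I}f(x)\qquad\text{and}\qquad c=\sum_{x\in I\cup\{v\}}f(x)=f(v)+\sum_{x\in I}f(x).$$
Subtracting these equations cancels the common term $\sum_{x\in I}f(x)$ and yields $f(u)-f(v)=0$, i.e. $f(u)=f(v)$, which is exactly the claim.

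There is no real obstacle here: once the definition of a well-covered weighting is unwound, the argument is a single cancellation between two sums over index sets that differ in one element. The only point that needs a moment's care is verifying that $u$ and $v$ do not already belong to $I$, so that the two sums split cleanly as $f(u)+\sum_{x\in I}f(x)$ and $f(v)+\sum_{x\in I}f(x)$ over the \emph{same} set $I$; this is immediate from maximality of $I\cup\{u\}$ and $I\cup\{v\}$ as indicated above.
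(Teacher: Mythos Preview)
Your argument is correct and is exactly the standard one-line proof: evaluate the well-covered constant on the two given \emph{MIS}s and subtract. The paper itself does not supply a proof of this lemma; it is quoted verbatim as Lemma~9 of Brown--Nowakowski \cite{BN} and used as a black box, so there is no in-paper proof to compare against.

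One small remark on your handling of the degenerate case: you correctly observe that if $u\in I$ then maximality forces $v\in I$ as well and the two \emph{MIS}s collapse to $I$, but saying ``there is nothing to extract'' does not actually establish $f(u)=f(v)$ in that situation, and indeed it can fail (take $G=P_3$ with endpoints $u,v$ and $I=\{u,v\}$). The lemma is implicitly meant for $u,v\notin I$ --- this is how it is always applied in the paper, with $u$ and $v$ adjacent --- so your reduction to that case is the right move; just be aware that the degenerate case is excluded by intent rather than covered by your argument.
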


\begin{theorem}[Theorem 5 in \cite{BK}]\label{BIRNAL}
Let $n \in \mathbb{N}_{\geq 5},$ $P_n$ be a path on $n$ vertices, and $f$ be a well-covered weighting of $P_n.$ Then, $f$ is constant on each of the two simplicial cliques of $P_n,$ while $f$ is zero at the remaining vertices of $P_n.$
\end{theorem}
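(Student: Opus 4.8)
The plan is to write $V(P_n)=\{v_1,\dots,v_n\}$ with edges $v_iv_{i+1}$, so that the two simplicial vertices are $v_1$ and $v_n$, the two simplicial cliques are the end-edges $\{v_1,v_2\}$ and $\{v_{n-1},v_n\}$, and the ``remaining vertices'' are the interior ones $v_3,\dots,v_{n-2}$ (a nonempty set precisely because $n\ge 5$). Thus the statement reduces to the three assertions $f(v_1)=f(v_2)$, $f(v_{n-1})=f(v_n)$, and $f(v_3)=\dots=f(v_{n-2})=0$. I would obtain the equalities between weights from Lemma \ref{RMKKBIR}, by exhibiting in each case an independent set $I$ together with two vertices $u,w$ such that $I\cup\{u\}$ and $I\cup\{w\}$ are both \emph{MIS}s of $P_n$; and I would obtain the single vanishing statement from one direct application of the constant-sum condition to two \emph{MIS}s of different cardinalities.

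For the two simplicial cliques: let $I$ be a \emph{MIS} of the induced subpath on $\{v_4,\dots,v_n\}$ that contains $v_4$ (built greedily from that end). Since $v_4\in I$ already dominates $v_3$, each of $I\cup\{v_1\}$ and $I\cup\{v_2\}$ is independent and dominating, hence a \emph{MIS} of $P_n$, and Lemma \ref{RMKKBIR} gives $f(v_1)=f(v_2)$. The mirror-image construction on the other end gives $f(v_{n-1})=f(v_n)$.

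For the interior vertices: first I would show $f(v_i)=f(v_{i+1})$ for every $i$ with $3\le i\le n-3$. Choose a \emph{MIS} $A$ of the subpath on $\{v_1,\dots,v_{i-2}\}$ containing $v_{i-2}$ and a \emph{MIS} $B$ of the subpath on $\{v_{i+3},\dots,v_n\}$ containing $v_{i+3}$, and put $I=A\cup B$. Then $I$ avoids $v_{i-1},v_i,v_{i+1},v_{i+2}$, the memberships $v_{i-2}\in A$ and $v_{i+3}\in B$ are exactly what dominates $v_{i-1}$ (resp.\ $v_{i+2}$) when one does not use $v_i$ (resp.\ $v_{i+1}$), and one checks that $I\cup\{v_i\}$ and $I\cup\{v_{i+1}\}$ are both \emph{MIS}s of $P_n$; Lemma \ref{RMKKBIR} then gives $f(v_i)=f(v_{i+1})$, and chaining yields $f(v_3)=\dots=f(v_{n-2})$. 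To see this common value is $0$, let $J$ be a \emph{MIS} of the subpath on $\{v_5,\dots,v_n\}$ containing $v_5$; then $J\cup\{v_1,v_3\}$ and $J\cup\{v_2\}$ are both \emph{MIS}s of $P_n$ (here $v_1,v_3$ together dominate exactly what $v_2$ alone dominates, and $v_5\in J$ covers $v_4$), so equating the two well-covered sums and cancelling $\sum_{v\in J}f(v)$ gives $f(v_1)+f(v_3)=f(v_2)$; with $f(v_1)=f(v_2)$ this forces $f(v_3)=0$, and hence every interior weight is $0$.

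The step I expect to be most delicate is the verification that the chosen ``frame'' $I$ (respectively $J$) is simultaneously independent, dominating in $P_n$, and disjoint from the handful of vertices being toggled, uniformly over all admissible indices $i$ and including the small boundary cases $n=5$ and $n=6$, where some of the auxiliary subpaths degenerate to a single vertex. Everything else is bookkeeping: the weight equalities are immediate from Lemma \ref{RMKKBIR}, and it is worth noting that the lone equality $f(v_3)=0$ cannot be extracted from Lemma \ref{RMKKBIR} (which only ever equates two weights) and genuinely requires the constant-sum defining property applied across \emph{MIS}s of different sizes.
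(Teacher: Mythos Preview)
The paper does not prove this statement: Theorem~\ref{BIRNAL} is quoted verbatim from \cite{BK} and is used as a black box in the proof of Lemma~\ref{ZEROO}, so there is no in-paper argument to compare against. Your proposal is therefore a self-contained proof of a cited result, and it is correct. The three ingredients---the end-clique equalities via Lemma~\ref{RMKKBIR} with a frame anchored at $v_4$ (resp.\ $v_{n-3}$), the chain $f(v_i)=f(v_{i+1})$ for $3\le i\le n-3$ via a frame anchored at $v_{i-2}$ and $v_{i+3}$, and the single vanishing $f(v_3)=0$ from the two \emph{MIS}s $J\cup\{v_1,v_3\}$ and $J\cup\{v_2\}$ of different sizes---all check out, including the degenerate cases $n=5,6$ where some auxiliary subpaths collapse to singletons. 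Your remark that the vanishing step genuinely requires comparing \emph{MIS}s of unequal cardinality (and so cannot come from Lemma~\ref{RMKKBIR} alone) is also accurate.
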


In the proofs of results that follow, we make repeated use of the algorithm that is outlined below.

\begin{definition}[Page 2 of \cite{PP}]
A greedy algorithm is a tool for constructing maximal independent sets of graphs. It is executed in the following manner. Let $G$ be a graph.

\begin{enumerate}

\item Select $v_1 \in V(G)$ and set $I=\{v_1\}.$
\item Delete $v_1$ and its neighborhood in $G.$ The remaining vertices of $G$ induce a subgraph $G_{v_1}.$
\item  Select any vertex $v_2$ of $G_{v_1}$ and put it in the set $I.$
\item Repeat step (2) to obtain a subgraph $(G_{v_1})_{v_2}$ of $G_{v_1}.$
\item Continue this process until all vertices of $G$ either have been added to $I$ or have been deleted.

The resulting set $I$ is a \emph{MIS} of $G.$

\end{enumerate}

\end{definition}

We note that a greedy algorithm need not start with $I,$ in the above definition, being a singleton. In fact, we can use this algorithm to extend an independent set of any size to a maximal independent set. We can also carry out this algorithm on a subgraph of a given graph to extend some independent set into another independent set that is maximal with respect to that subgraph. We are now ready to prove the first result of this section.

\begin{lemma}\label{CONSt}
Let $n \in \mathbb{N}_{\geq 3}$ and $f$ be any well-covered weighting of $S_n.$ Then, $f$ is constant on $V(S_n)-\bigcup_{i=1}^3 C_i.$
\end{lemma}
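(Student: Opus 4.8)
The plan is to show that any well-covered weighting $f$ of $S_n$ assigns the same value to every vertex lying outside the three simplicial cliques $C_1, C_2, C_3$. The natural strategy is to establish that adjacent non-simplicial vertices must receive equal weight, and then to invoke connectedness of the induced subgraph on $V(S_n) - \bigcup_{i=1}^3 C_i$ to propagate equality across the whole set. So the argument breaks into two parts: (i) a local step showing $f(u) = f(v)$ whenever $u, v$ are adjacent non-simplicial vertices, and (ii) a global step verifying that the subgraph induced on $V(S_n) - \bigcup_{i=1}^3 C_i$ is connected, so that the local equalities chain together.

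For the local step, I would use Lemma \ref{RMKKBIR}: given adjacent non-simplicial $u, v$, it suffices to produce an independent set $I$ (disjoint from $\{u,v\}$ and not adjacent to either) such that both $I \cup \{u\}$ and $I \cup \{v\}$ are \emph{MIS}s of $S_n$. The discussion immediately preceding the lemma is set up precisely for this: for a pair of adjacent non-simplicial vertices we have $6 \le |N[\{u,v\}]| \le 7$, so $V(S_n) - N[\{u,v\}]$ is nonempty (indeed of size $\ge 8$). Starting from the empty set, run the greedy algorithm on the subgraph induced by $V(S_n) - N[\{u,v\}]$ to obtain an independent set $I$ that is maximal with respect to that subgraph; by construction no vertex of $I$ is adjacent to $u$ or to $v$, and every vertex of $V(S_n)$ outside $N[u] \cup N[v]$ is either in $I$ or adjacent to something in $I$. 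Then $I \cup \{u\}$ dominates all of $S_n$ (the vertices of $N[u]$ are covered by $u$, the vertices of $N[v] - N[u]$ are covered because $v$'s other neighbors... — here one must be slightly careful) and is independent, hence a \emph{MIS}; symmetrically for $I \cup \{v\}$. The point requiring care is that adding $u$ might leave some neighbor of $v$ uncovered, so one may instead want to choose $I$ maximal in $V(S_n) - N[\{u,v\}]$ and then argue that $N[\{u,v\}] - \{u,v\}$ is already dominated: since $N[\{u,v\}] - \{u,v\} \ne \emptyset$ and these vertices sit on the "body" of the gasket, each is adjacent to $u$ or $v$; so $I \cup \{u\}$ is maximal iff every vertex of $N[v] - N[u]$ that is not adjacent to $u$ is adjacent to some element of $I$ — and such vertices lie outside $N[\{u,v\}]$... this needs the explicit local geometry (cases (1) and (2) above). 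I expect this to be the main obstacle: verifying, from the recursive structure of $S_n$, that the greedily-built $I$ together with $u$ (and separately with $v$) really is maximal. It likely requires splitting into the two cases $|N[\{u,v\}] \cap N[\{u,v\}]|$ distinguished in the text and checking each carefully.

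For the global step, I would show that the graph $S_n[V(S_n) - \bigcup_{i=1}^3 C_i]$ is connected. This follows from the recursive construction: removing the nine vertices of the three corner triangles from $S_n$ leaves the "central" part of the gasket, which one can see inductively is connected — the three $S_{n-1}$-subgraphs overlap pairwise, and after deleting corner cliques the remaining portions of each $S_{n-1}$ stay connected and remain glued to one another at the non-corner shared vertices. I would phrase this as an induction on $n$, base case $n = 3$ (visible directly from the blue-vertex figure), with the inductive step using that each $S_{n-1}$ subgraph of $S_n$ loses only one genuine simplicial clique (its single simplicial corner) while its other two corners become ordinary non-simplicial vertices that are retained, keeping the pieces linked. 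Combining (i) and (ii): pick any two vertices $x, y \in V(S_n) - \bigcup_{i=1}^3 C_i$, take a path between them inside that induced subgraph, apply the local step along each edge of the path, and conclude $f(x) = f(y)$. Since $x, y$ were arbitrary, $f$ is constant on $V(S_n) - \bigcup_{i=1}^3 C_i$.
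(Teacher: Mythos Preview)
Your overall architecture---show $f(u)=f(v)$ for every adjacent pair $u,v$ outside the simplicial cliques via Lemma~\ref{RMKKBIR}, then propagate through connectedness of the induced subgraph on $V(S_n)-\bigcup_{i=1}^3 C_i$---is essentially the paper's approach as well. The paper organizes the same idea as an induction on $n$ (handling pairs inside each $S_{n-1}$ piece, then linking the pieces across corner vertices), but the substance is identical: the local-pair argument applied along enough edges. Your connectedness step (ii) is fine.

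The genuine gap is in the local step (i). Taking $I$ to be a greedy maximal independent set of the subgraph induced on $V(S_n)\setminus N[\{u,v\}]$ does \emph{not} guarantee that $I\cup\{u\}$ is maximal in $S_n$. A vertex $w\in N(v)\setminus N[u]$ is not adjacent to $u$, yet it lies \emph{inside} $N[\{u,v\}]$ (since $w\in N(v)$), so it was excluded from the greedy process and there is no reason it should be dominated by $I$. Your sentence ``such vertices lie outside $N[\{u,v\}]$'' is simply false, and that is precisely where the argument collapses: $I\cup\{u\}$ may fail to cover $w$, so it need not be a \emph{MIS}.

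The repair, which is exactly what the paper does, is to \emph{first} choose an independent set $\mathcal{T}\subset V(S_n)\setminus N[\{u,v\}]$ that already dominates every vertex of $N[\{u,v\}]\setminus\{u,v\}$, and only \emph{then} extend $\mathcal{T}$ greedily inside $V(S_n)\setminus N[\{u,v\}]$ to obtain $I$. The existence of such a $\mathcal{T}$ is where the local geometry of $S_n$ genuinely enters: each vertex of $N[\{u,v\}]\setminus\{u,v\}$ has a neighbor outside $N[\{u,v\}]$, and one must check (in the two cases $|N[u]\cap N[v]|=3$ or $4$ distinguished just before the lemma) that these outside neighbors can be selected to form an independent set. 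Once $\mathcal{T}$ is in hand, the extended $I$ dominates all of $V(S_n)\setminus\{u,v\}$, so both $I\cup\{u\}$ and $I\cup\{v\}$ are \emph{MIS}s and Lemma~\ref{RMKKBIR} applies.
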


\begin{proof}
This proof is by induction on $n \in \mathbb{N}_{\geq 3}.$ Choose a pair of adjacent vertices $v_1,v_2 \in V(S_3)-\bigcup_{i=1}^3 C_i.$ Let $T$ be a set that contains vertices adjacent to each vertex in the neighborhood of $v_1$ and of $v_2,$ excluding $v_1$ and $v_2.$ 
\begin{center}
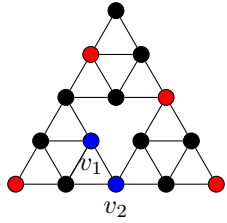
\begin{figure}[H]
\begin{tikzpicture}[x=.15in, y=.15in]
\vertex (w1) at (0,0)[label=below:$$, style={fill=red}]{};
\vertex (w2) at (7,0)[label=below:$$, style={fill=red}]{};
\vertex (w3) at (3.5,6.06)[label=below:$$, style={fill=black}]{};
\vertex (w4) at (3.5,0)[label=below:$v_2$, style={fill=blue}]{};
\vertex (w5) at (1.75,3.03)[label=below:$$, style={fill=black}]{};
\vertex (w6) at (5.25,3.03)[label=below:$$, style={fill=red}]{};
\vertex (w7) at (1.75,0)[label=below:$$, style={fill=black}]{};
\vertex (w8) at (5.25,0)[label=below:$$, style={fill=black}]{};
\vertex (w9) at (0.87,1.51)[label=below:$$, style={fill=black}]{};
\vertex (w10) at (6.13,1.51)[label=below:$$, style={fill=black}]{};
\vertex (w11) at (2.63,1.51)[label=below:$v_1$, style={fill=blue}]{};
\vertex (w12) at (4.37,1.51)[label=below:$$, style={fill=black}]{};
\vertex (w13) at (3.5,3.03)[label=below:$$, style={fill=black}]{};
\vertex (w14) at (2.62,4.53)[label=below:$$, style={fill=red}]{};
\vertex (w15) at (4.38,4.53)[label=below:$$, style={fill=black}]{};
\path 
(w1) edge (w7)
(w4) edge (w7)
(w4) edge (w8)
(w8) edge (w2)
(w1) edge (w9)
(w9) edge (w7)
(w11) edge (w7)
(w11) edge (w4)
(w11) edge (w7)
(w4) edge (w12)
(w8) edge (w12)
(w8) edge (w10)
(w2) edge (w10)
(w9) edge (w11)
(w12) edge (w10)
(w9) edge (w5)
(w5) edge (w11)
(w12) edge (w6)
(w6) edge (w10)
(w5) edge (w13)
(w13) edge (w6)
(w5) edge (w14)
(w14) edge (w13)
(w14) edge (w15)
(w15) edge (w13)
(w6) edge (w15)
(w14) edge (w3)
(w3) edge (w15);
\end{tikzpicture}
\caption{A \emph{MIS} $T$ in red.}
\end{figure}
\end{center}

Since $T \cup \{v_1\}$ and $T \cup \{v_2\}$ are \emph{MIS}s of $S_3,$ by Lemma \ref{RMKKBIR}, $f(v_1)=f(v_2),$ for any  well-covered weighting $f$ of $S_3.$ Repeating this process for each of the remaining five pairs of adjacent vertices in $ V(S_3)-\bigcup_{i=1}^3 C_i,$ we obtain that $f$ is constant on this set.

Assume that the result holds for all $m<n.$ Pick any $S_{n-1}$ subgraph of $S_n.$ By induction, all vertices not in a corner of this $S_{n-1}$ subgraph have the same weight. Choose one out of the two $S_{n-1}$ corners that are not simplicial cliques of $S_n.$ Let $t$ be any vertex in this corner and $t'$ be a vertex adjacent to $t$ that lies outside of this corner. Note that the vertex $t'$ could belong to the chosen $S_{n-1}$ subgraph or to a corner of another $S_{n-1}$ subgraph in the vecinity. This is illustrated in Figure \ref{tprime}. Let $\mathcal{T} \subset V(S_n)- N[\{t,t'\}]$ be an independent set that contains vertices adjacent to each vertex in $N[\{t,t'\}]-\{t,t'\}.$ Extend $\mathcal{T}$ with vertices from $V(S_n)- N[\{t,t'\}]$ via a greedy algorithm. Make this set as large as possible and call it $\mathcal{I}.$

\begin{center}
\begin{figure}[H]
{\begin{tikzpicture}[x=.26in, y=.26in]
\vertex (w1) at (0,0)[label=below:$$, style={fill=black}]{};
\vertex (w2) at (7,0)[label=below:$$, style={fill=red}]{};
\vertex (w3) at (3.5,6.06)[label=below:$$, style={fill=red}]{};
\vertex (w4) at (3.5,0)[label=below:$t$, style={fill=blue}]{};
\vertex (w5) at (1.75,3.03)[label=below:$$, style={fill=red}]{};
\vertex (w6) at (5.25,3.03)[label=below:$$, style={fill=red}]{};
\vertex (w7) at (1.75,0)[label=below:$$, style={fill=black}]{};
\vertex (w8) at (5.25,0)[label=below:$$, style={fill=red}]{};
\vertex (w9) at (0.87,1.51)[label=below:$$, style={fill=red}]{};
\vertex (w10) at (6.13,1.51)[label=below:$$, style={fill=red}]{};
\vertex (w11) at (2.63,1.51)[label=right:$$, style={fill=red}]{};
\vertex (w12) at (4.37,1.51)[label=below:$$, style={fill=red}]{};
\vertex (w13) at (2.62,4.53)[label=below:$$, style={fill=red}]{};
\vertex (w14) at (4.38,4.53)[label=below:$$, style={fill=red}]{};
\vertex (w15) at (3.49,3.03)[label=below:$$, style={fill=red}]{};
\vertex (w16) at (2.62,3.03)[label=below:$$, style={fill=black}]{};
\vertex (w17) at (4.38,3.03)[label=below:$$, style={fill=black}]{};
\vertex (w18) at (.87, 0)[label=below:$$, style={fill=red}]{};
\vertex (w19) at (2.62,0)[label=below:$t'$, style={fill=blue}]{};
\vertex (w20) at (4.38,0)[label=below:$$, style={fill=black}]{};
\vertex (w21) at (6.12,0)[label=below:$$, style={fill=black}]{};
\vertex (w22) at (0.435,0.753)[label=below:$$, style={fill=black}]{};
\vertex (w23) at (1.305,0.753)[label=below:$$, style={fill=black}]{};
\vertex (w24) at (2.175,0.753)[label=below:$$, style={fill=black}]{};
\vertex (w25) at (3.045,0.753)[label=below:$$, style={fill=black}]{};
\vertex (w26) at (3.915,0.753)[label=below:$$, style={fill=black}]{};
\vertex (w27) at (4.785,0.753)[label=below:$$, style={fill=black}]{};
\vertex (w28) at (5.655,0.753)[label=below:$$, style={fill=black}]{};
\vertex (w29) at (6.525,0.753)[label=below:$$, style={fill=black}]{};
\vertex (w30) at (1.75,1.51)[label=below:$$, style={fill=black}]{};
\vertex (w31) at (5.25,1.51)[label=below:$$, style={fill=black}]{};
\vertex (w32) at (1.305,2.26)[label=below:$$, style={fill=black}]{};
\vertex (w33) at (2.175,2.26)[label=right:$$, style={fill=black}]{};
\vertex (w34) at (4.805,2.26)[label=below:$$, style={fill=black}]{};
\vertex (w35) at (5.675,2.26)[label=below:$$, style={fill=black}]{};
\vertex (w36) at (2.185,3.78)[label=below:$$, style={fill=black}]{};
\vertex (w37) at (3.055,3.78)[label=below:$$, style={fill=black}]{};
\vertex (w38) at (3.925,3.78)[label=below:$$, style={fill=black}]{};
\vertex (w39) at (4.795,3.78)[label=below:$$, style={fill=black}]{};
\vertex (w40) at (3.49,4.53)[label=below:$$, style={fill=black}]{};
\vertex (w41) at (3.055,5.28)[label=below:$$, style={fill=black}]{};
\vertex (w42) at (3.925,5.28)[label=below:$$, style={fill=black}]{};
\path
(w1) edge (w18)
(w18) edge (w7)
(w7) edge (w19)
(w19) edge (w4)
(w4) edge (w20)
(w20) edge (w8)
(w8) edge (w21)
(w21) edge (w2)
(w1) edge (w22)
(w22) edge (w23)
(w23) edge (w7)
(w7) edge (w24)
(w24) edge (w19)
(w19) edge (w25)
(w25) edge (w4)
(w4) edge (w26)
(w26) edge (w20)
(w20) edge (w27)
(w27) edge (w8)
(w28) edge (w8)
(w28) edge (w21)
(w21) edge (w29)
(w29) edge (w2)
(w18) edge (w22)
(w18) edge (w23)
(w22) edge (w9)
(w23) edge (w9)
(w9) edge (w32)
(w32) edge (w5)
(w5) edge (w36)
(w36) edge (w13)
(w13) edge (w36)
(w13) edge (w41)
(w41) edge (w3)
(w3) edge (w42)
(w14) edge (w42)
(w14) edge (w39)
(w39) edge (w6)
(w6) edge (w35)
(w35) edge (w10)
(w10) edge (w29)
(w32) edge (w30)
(w9) edge (w30)
(w11) edge (w30)
(w11) edge (w24)
(w30) edge (w33)
(w33) edge (w11)
(w32) edge (w33)
(w5) edge (w33)
(w11) edge (w25)
(w24) edge (w25)
(w26) edge (w12)
(w12) edge (w34)
(w34) edge (w6)
(w34) edge (w35)
(w12) edge (w27)
(w26) edge (w27)
(w12) edge (w31)
(w31) edge (w10)
(w10) edge (w28)
(w28) edge (w29)
(w31) edge (w34)
(w31) edge (w35)
(w6) edge (w17)
(w17) edge (w15)
(w15) edge (w16)
(w16) edge (w5)
(w36) edge (w16)
(w16) edge (w37)
(w37) edge (w15)
(w36) edge (w37)
(w13) edge (w37)
(w13) edge (w40)
(w40) edge (w14)
(w14) edge (w38)
(w38) edge (w15)
(w38) edge (w17)
(w17) edge (w39)
(w38) edge (w39)
(w41) edge (w40)
(w41) edge (w42)
(w40) edge (w42);
\end{tikzpicture}}\hspace{.2in}{\begin{tikzpicture}[x=.26in, y=.26in]
\vertex (w1) at (0,0)[label=below:$$, style={fill=black}]{};
\vertex (w2) at (7,0)[label=below:$$, style={fill=black}]{};
\vertex (w3) at (3.5,6.06)[label=below:$$, style={fill=red}]{};
\vertex (w4) at (3.5,0)[label=below:$$, style={fill=black}]{};
\vertex (w5) at (1.75,3.03)[label=below:$$, style={fill=red}]{};
\vertex (w6) at (5.25,3.03)[label=below:$$, style={fill=black}]{};
\vertex (w7) at (1.75,0)[label=below:$$, style={fill=black}]{};
\vertex (w8) at (5.25,0)[label=below:$$, style={fill=black}]{};
\vertex (w9) at (0.87,1.51)[label=below:$$, style={fill=red}]{};
\vertex (w10) at (6.13,1.51)[label=below:$$, style={fill=red}]{};
\vertex (w11) at (2.63,1.51)[label=right:$$, style={fill=red}]{};
\vertex (w12) at (4.37,1.51)[label=below:$$, style={fill=black}]{};
\vertex (w13) at (2.62,4.53)[label=below:$$, style={fill=red}]{};
\vertex (w14) at (4.38,4.53)[label=below:$$, style={fill=red}]{};
\vertex (w15) at (3.49,3.03)[label=below:$$, style={fill=red}]{};
\vertex (w16) at (2.62,3.03)[label=below:$$, style={fill=black}]{};
\vertex (w17) at (4.38,3.03)[label=below:$$, style={fill=black}]{};
\vertex (w18) at (.87, 0)[label=below:$$, style={fill=red}]{};
\vertex (w19) at (2.62,0)[label=below:$$, style={fill=red}]{};
\vertex (w20) at (4.38,0)[label=below:$t$, style={fill=blue}]{};
\vertex (w21) at (6.12,0)[label=below:$$, style={fill=red}]{};
\vertex (w22) at (0.435,0.753)[label=below:$$, style={fill=black}]{};
\vertex (w23) at (1.305,0.753)[label=below:$$, style={fill=black}]{};
\vertex (w24) at (2.175,0.753)[label=below:$$, style={fill=black}]{};
\vertex (w25) at (3.045,0.753)[label=below:$$, style={fill=black}]{};
\vertex (w26) at (3.915,0.753)[label=below:$$, style={fill=black}]{};
\vertex (w27) at (4.785,0.753)[label=right:$t'$, style={fill=blue}]{};
\vertex (w28) at (5.655,0.753)[label=below:$$, style={fill=black}]{};
\vertex (w29) at (6.525,0.753)[label=below:$$, style={fill=black}]{};
\vertex (w30) at (1.75,1.51)[label=below:$$, style={fill=black}]{};
\vertex (w31) at (5.25,1.51)[label=below:$$, style={fill=black}]{};
\vertex (w32) at (1.305,2.26)[label=below:$$, style={fill=black}]{};
\vertex (w33) at (2.175,2.26)[label=right:$$, style={fill=black}]{};
\vertex (w34) at (4.805,2.26)[label=below:$$, style={fill=red}]{};
\vertex (w35) at (5.675,2.26)[label=below:$$, style={fill=black}]{};
\vertex (w36) at (2.185,3.78)[label=below:$$, style={fill=black}]{};
\vertex (w37) at (3.055,3.78)[label=below:$$, style={fill=black}]{};
\vertex (w38) at (3.925,3.78)[label=below:$$, style={fill=black}]{};
\vertex (w39) at (4.795,3.78)[label=below:$$, style={fill=black}]{};
\vertex (w40) at (3.49,4.53)[label=below:$$, style={fill=black}]{};
\vertex (w41) at (3.055,5.28)[label=below:$$, style={fill=black}]{};
\vertex (w42) at (3.925,5.28)[label=below:$$, style={fill=black}]{};
\path
(w1) edge (w18)
(w18) edge (w7)
(w7) edge (w19)
(w19) edge (w4)
(w4) edge (w20)
(w20) edge (w8)
(w8) edge (w21)
(w21) edge (w2)
(w1) edge (w22)
(w22) edge (w23)
(w23) edge (w7)
(w7) edge (w24)
(w24) edge (w19)
(w19) edge (w25)
(w25) edge (w4)
(w4) edge (w26)
(w26) edge (w20)
(w20) edge (w27)
(w27) edge (w8)
(w28) edge (w8)
(w28) edge (w21)
(w21) edge (w29)
(w29) edge (w2)
(w18) edge (w22)
(w18) edge (w23)
(w22) edge (w9)
(w23) edge (w9)
(w9) edge (w32)
(w32) edge (w5)
(w5) edge (w36)
(w36) edge (w13)
(w13) edge (w36)
(w13) edge (w41)
(w41) edge (w3)
(w3) edge (w42)
(w14) edge (w42)
(w14) edge (w39)
(w39) edge (w6)
(w6) edge (w35)
(w35) edge (w10)
(w10) edge (w29)
(w32) edge (w30)
(w9) edge (w30)
(w11) edge (w30)
(w11) edge (w24)
(w30) edge (w33)
(w33) edge (w11)
(w32) edge (w33)
(w5) edge (w33)
(w11) edge (w25)
(w24) edge (w25)
(w26) edge (w12)
(w12) edge (w34)
(w34) edge (w6)
(w34) edge (w35)
(w12) edge (w27)
(w26) edge (w27)
(w12) edge (w31)
(w31) edge (w10)
(w10) edge (w28)
(w28) edge (w29)
(w31) edge (w34)
(w31) edge (w35)
(w6) edge (w17)
(w17) edge (w15)
(w15) edge (w16)
(w16) edge (w5)
(w36) edge (w16)
(w16) edge (w37)
(w37) edge (w15)
(w36) edge (w37)
(w13) edge (w37)
(w13) edge (w40)
(w40) edge (w14)
(w14) edge (w38)
(w38) edge (w15)
(w38) edge (w17)
(w17) edge (w39)
(w38) edge (w39)
(w41) edge (w40)
(w41) edge (w42)
(w40) edge (w42);
\end{tikzpicture}}
\caption{Examples of $t,$ $t'$ and $\mathcal{I}.$}
\label{tprime}
\end{figure}
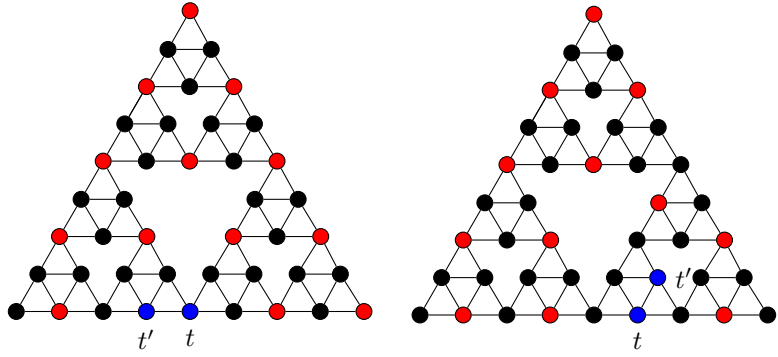
\end{center}

We have that $\mathcal{I} \cup \{t\}$ and $\mathcal{I} \cup \{t'\}$ are independent sets. Now, if $w \in V(S_n)-N[\{t,t'\}],$ $w$ is adjacent to some vertex in $\mathcal{I}.$ If $w \in N[\{t,t'\}]-\{t,t'\},$ $w$ is adjacent to either a vertex in $\mathcal{I},$ to $t$ or $t'.$ Thus,  for any $w \in V(S_n)-\{t,t'\},$ $(\mathcal{I} \cup \{t\}) \cup \{w\}$ and $(\mathcal{I} \cup \{t'\}) \cup \{w\}$ are dependent sets. Hence, $\mathcal{I} \cup \{t\}$ and $\mathcal{I} \cup \{t'\}$ are \emph{MIS}s of $S_n$ so that by Lemma \ref{RMKKBIR}, $f(t)=f(t'),$ for any well-covered weighting $f$ of $S_n.$ 

Since the $S_{n-1}$ subgraph of $S_n$ was chosen without loss of generality, it follows that any well-covered weighting of $S_n$ is constant on the set of all vertices that do not belong to a simplicial clique of $S_n.$
\end{proof}

We now further examine the properties of weightings in the well-covered space of $S_n,$ for $n \in \mathbb{N}_{\geq 3}.$ This result and the corollary that follows it, give us a full description of the well-covered weightings of $S_n.$

\begin{lemma}\label{ZEROO}
Let $n \in \mathbb{N}_{\geq 3}$ and $f$ be a well-covered weighting of $S_n.$ Then, $f(v)=0,$ for all $v$ on the sides of $S_n,$ unless $v$ belongs to a simplicial clique of $S_n,$  where $f$ is constant.
\end{lemma}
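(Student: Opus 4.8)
The statement is the $S_n$-analogue of Theorem \ref{BIRNAL}, but since the restriction of a well-covered weighting $f$ of $S_n$ to one of its sides need not be a well-covered weighting of that path, I would not invoke Theorem \ref{BIRNAL} and would instead argue directly, building on Lemma \ref{CONSt}. By Lemma \ref{CONSt}, $f$ takes a single constant value $c$ on $V(S_n)-\bigcup_{i=1}^{3}C_i$. Every vertex that lies on a side of $S_n$ but is not contained in a simplicial clique belongs to this set, so it already has value $c$; hence the first half of the lemma reduces to showing $c=0$, and it remains to check that $f$ is constant on each $C_i$.

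To prove $c=0$, I would exhibit a ``swap'' of maximal independent sets. Fix a vertex $p$ of $S_n$ lying deep in the interior, so that $p\notin\bigcup_i C_i$ and $p$ has two neighbors $r,r'$ with $r\not\sim r'$ and $r,r'\notin\bigcup_i C_i$ (for $n=3$ one may take $p$ to be the central vertex, whose two non-adjacent neighbors off the three corner triangles play the role of $r,r'$; for larger $n$ any sufficiently central vertex works, and the observations made above that interior vertices have degree $4$ and that complements $V(S_n)-N[\{u,v\}]$ are nonempty guarantee such a configuration). Using the greedy algorithm, construct a maximal independent set $\mathcal{M}$ with $p\in\mathcal{M}$ and $\mathcal{M}\cap\big(N(\{r,r'\})-\{p\}\big)=\emptyset$; after deleting $N[p]$ only a bounded set of vertices must be avoided, and the nonemptiness remarks give enough room to complete $\mathcal{M}$. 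Then $(\mathcal{M}-\{p\})\cup\{r,r'\}$ is again a maximal independent set: it is independent because $r\not\sim r'$ and neither $r$ nor $r'$ has a neighbor in $\mathcal{M}-\{p\}$; it dominates $N[p]$ through $r$ and $r'$; and maximality is inherited. Equating the constant weighted sums over $\mathcal{M}$ and over $(\mathcal{M}-\{p\})\cup\{r,r'\}$ gives $f(p)=f(r)+f(r')$, and since $p,r,r'\notin\bigcup_i C_i$, Lemma \ref{CONSt} turns this into $c=2c$, whence $c=0$.

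For the constancy of $f$ on a simplicial clique $C_i=\{v_i,p_i,q_i\}$ (with $v_i$ the simplicial vertex), I would apply Lemma \ref{RMKKBIR} twice. One builds, with the greedy algorithm, an independent set $I$ that avoids $N(v_i)\cup N(p_i)$ and has the few neighbors of $v_i$ and $p_i$ lying outside $C_i$ forced into the ``domination role'' of $I$, so that both $I\cup\{v_i\}$ and $I\cup\{p_i\}$ are maximal independent sets; the room needed for this is again supplied by the nonemptiness remarks of this section. Lemma \ref{RMKKBIR} then gives $f(v_i)=f(p_i)$, and an analogous construction gives $f(v_i)=f(q_i)$, so $f$ is constant on $C_i$; by the rotational symmetry of $S_n$ this holds for each of $C_1,C_2,C_3$. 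Combined with $c=0$, this yields the lemma.

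The main obstacle is entirely in carrying out these maximal-independent-set constructions uniformly in $n$: one must be sure that, for every $n\ge 3$ and in particular near the three corners and in the small case $n=3$, a maximal independent set with the prescribed vertices forced in and the prescribed neighborhoods avoided actually exists. No new idea is required beyond the structural facts already assembled in this section (degree $2$ versus $4$, the bounds $6\le|N[\{u,v\}]|\le 7$ for adjacent interior vertices, and $|V(S_n)-N[\{u,v\}]|\ge 8$); the work is in invoking them carefully for each greedy construction.
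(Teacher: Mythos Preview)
Your argument is correct but takes a genuinely different route from the paper's. You decline to invoke Theorem~\ref{BIRNAL} because $f|_{P_n}$ need not be a well-covered weighting of the side $P_n$; the paper overcomes exactly this objection with a single ``freezing'' construction. It builds an independent set $\mathcal{K}'\subset V(S_n)\setminus N[P_n]$ dominating every vertex of $N[P_n]\setminus P_n$, so that $\mathcal{K}'\cup\mathcal{M}$ is a \emph{MIS} of $S_n$ for \emph{every} \emph{MIS} $\mathcal{M}$ of $P_n$; holding $\mathcal{K}'$ fixed while varying $\mathcal{M}$ then shows that $\sum_{v\in\mathcal{M}}f(v)$ is constant over the \emph{MIS}s of $P_n$, i.e.\ $f|_{P_n}$ \emph{is} a well-covered weighting of the path after all, and Theorem~\ref{BIRNAL} delivers both conclusions of the lemma at once (interior zeros, and equal weights on each endpoint pair, which gives constancy on every $C_i$ once two sides are used). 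Your route instead leans on Lemma~\ref{CONSt} and a $1$-for-$2$ swap to force $c=2c$, and handles the $C_i$ separately via Lemma~\ref{RMKKBIR}. The swap is structurally sound---each interior $p$ lies in exactly two $K_3$'s, so picking $r,r'$ from different triangles automatically dominates the other two neighbors of $p$---and the needed $\mathcal{M}$ does exist (for $n=3$, the three simplicial vertices together with two well-chosen interior vertices already work). The paper's method is more economical: one greedy construction instead of three, no reliance on Lemma~\ref{CONSt}, and the path theorem absorbs the casework you flag as ``the main obstacle''; your method, by contrast, stays self-contained within the section and avoids the external citation.
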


\begin{proof}
Let $P_n$ be a side of $S_n$ and $\mathcal{K} \subset V(S_n)-N[P_n]$ be an independent set that contains vertices adjacent to each $v \in N[P_n]-P_n.$ 

\begin{center}
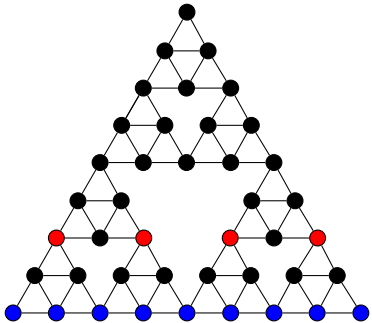
\begin{figure}[H]
\begin{tikzpicture}[x=.26in, y=.26in]
\vertex (w1) at (0,0)[label=below:$$, style={fill=blue}]{};
\vertex (w2) at (7,0)[label=below:$$, style={fill=blue}]{};
\vertex (w3) at (3.5,6.06)[label=below:$$, style={fill=black}]{};
\vertex (w4) at (3.5,0)[label=below:$$, style={fill=blue}]{};
\vertex (w5) at (1.75,3.03)[label=below:$$, style={fill=black}]{};
\vertex (w6) at (5.25,3.03)[label=below:$$, style={fill=black}]{};
\vertex (w7) at (1.75,0)[label=below:$$, style={fill=blue}]{};
\vertex (w8) at (5.25,0)[label=below:$$, style={fill=blue}]{};
\vertex (w9) at (0.87,1.51)[label=below:$$, style={fill=red}]{};
\vertex (w10) at (6.13,1.51)[label=below:$$, style={fill=red}]{};
\vertex (w11) at (2.63,1.51)[label=right:$$, style={fill=red}]{};
\vertex (w12) at (4.37,1.51)[label=below:$$, style={fill=red}]{};
\vertex (w13) at (2.62,4.53)[label=below:$$, style={fill=black}]{};
\vertex (w14) at (4.38,4.53)[label=below:$$, style={fill=black}]{};
\vertex (w15) at (3.49,3.03)[label=below:$$, style={fill=black}]{};
\vertex (w16) at (2.62,3.03)[label=below:$$, style={fill=black}]{};
\vertex (w17) at (4.38,3.03)[label=below:$$, style={fill=black}]{};
\vertex (w18) at (.87, 0)[label=below:$$, style={fill=blue}]{};
\vertex (w19) at (2.62,0)[label=below:$$, style={fill=blue}]{};
\vertex (w20) at (4.38,0)[label=below:$$, style={fill=blue}]{};
\vertex (w21) at (6.12,0)[label=below:$$, style={fill=blue}]{};
\vertex (w22) at (0.435,0.753)[label=below:$$, style={fill=black}]{};
\vertex (w23) at (1.305,0.753)[label=below:$$, style={fill=black}]{};
\vertex (w24) at (2.175,0.753)[label=below:$$, style={fill=black}]{};
\vertex (w25) at (3.045,0.753)[label=below:$$, style={fill=black}]{};
\vertex (w26) at (3.915,0.753)[label=below:$$, style={fill=black}]{};
\vertex (w27) at (4.785,0.753)[label=below:$$, style={fill=black}]{};
\vertex (w28) at (5.655,0.753)[label=below:$$, style={fill=black}]{};
\vertex (w29) at (6.525,0.753)[label=below:$$, style={fill=black}]{};
\vertex (w30) at (1.75,1.51)[label=below:$$, style={fill=black}]{};
\vertex (w31) at (5.25,1.51)[label=below:$$, style={fill=black}]{};
\vertex (w32) at (1.305,2.26)[label=below:$$, style={fill=black}]{};
\vertex (w33) at (2.175,2.26)[label=right:$$, style={fill=black}]{};
\vertex (w34) at (4.805,2.26)[label=below:$$, style={fill=black}]{};
\vertex (w35) at (5.675,2.26)[label=below:$$, style={fill=black}]{};
\vertex (w36) at (2.185,3.78)[label=below:$$, style={fill=black}]{};
\vertex (w37) at (3.055,3.78)[label=below:$$, style={fill=black}]{};
\vertex (w38) at (3.925,3.78)[label=below:$$, style={fill=black}]{};
\vertex (w39) at (4.795,3.78)[label=below:$$, style={fill=black}]{};
\vertex (w40) at (3.49,4.53)[label=below:$$, style={fill=black}]{};
\vertex (w41) at (3.055,5.28)[label=below:$$, style={fill=black}]{};
\vertex (w42) at (3.925,5.28)[label=below:$$, style={fill=black}]{};
\path
(w1) edge (w18)
(w18) edge (w7)
(w7) edge (w19)
(w19) edge (w4)
(w4) edge (w20)
(w20) edge (w8)
(w8) edge (w21)
(w21) edge (w2)
(w1) edge (w22)
(w22) edge (w23)
(w23) edge (w7)
(w7) edge (w24)
(w24) edge (w19)
(w19) edge (w25)
(w25) edge (w4)
(w4) edge (w26)
(w26) edge (w20)
(w20) edge (w27)
(w27) edge (w8)
(w28) edge (w8)
(w28) edge (w21)
(w21) edge (w29)
(w29) edge (w2)
(w18) edge (w22)
(w18) edge (w23)
(w22) edge (w9)
(w23) edge (w9)
(w9) edge (w32)
(w32) edge (w5)
(w5) edge (w36)
(w36) edge (w13)
(w13) edge (w36)
(w13) edge (w41)
(w41) edge (w3)
(w3) edge (w42)
(w14) edge (w42)
(w14) edge (w39)
(w39) edge (w6)
(w6) edge (w35)
(w35) edge (w10)
(w10) edge (w29)
(w32) edge (w30)
(w9) edge (w30)
(w11) edge (w30)
(w11) edge (w24)
(w30) edge (w33)
(w33) edge (w11)
(w32) edge (w33)
(w5) edge (w33)
(w11) edge (w25)
(w24) edge (w25)
(w26) edge (w12)
(w12) edge (w34)
(w34) edge (w6)
(w34) edge (w35)
(w12) edge (w27)
(w26) edge (w27)
(w12) edge (w31)
(w31) edge (w10)
(w10) edge (w28)
(w28) edge (w29)
(w31) edge (w34)
(w31) edge (w35)
(w6) edge (w17)
(w17) edge (w15)
(w15) edge (w16)
(w16) edge (w5)
(w36) edge (w16)
(w16) edge (w37)
(w37) edge (w15)
(w36) edge (w37)
(w13) edge (w37)
(w13) edge (w40)
(w40) edge (w14)
(w14) edge (w38)
(w38) edge (w15)
(w38) edge (w17)
(w17) edge (w39)
(w38) edge (w39)
(w41) edge (w40)
(w41) edge (w42)
(w40) edge (w42);
\end{tikzpicture}
\caption{An example of $\mathcal{K}$ (red) for $S_4.$}
\end{figure}
\end{center}

Via a greedy algorithm, we can extend $\mathcal{K}$ with vertices from $\mathcal{K} \subset V(S_n)-N[P_n]$ into an independent set $\mathcal{K}'$ that is as large as possible. Note that for any independent set $\mathcal{M} \subset P_n$ that is maximal with respect to $P_n,$ $\mathcal{K}' \cup \mathcal{M}$ is a \emph{MIS} of $S_n.$ Thus, the vertices of $P_n$ behave as if they were the vertices of an isolated path. By Thoerem \ref{BIRNAL}, since $|P_n| \geq 5,$ the two vertices at each end of $P_n$ have the same weight, while all other vertices of $P_n$ have weight zero, under any well-covered weighting of $S_n.$
\end{proof}

\begin{corollary}
For any well-covered weighting $f$ of $S_n$ with $n \in \mathbb{N}_{\geq 3},$ $f(v)=0,$ for all $v \in V(S_n)-\bigcup_{i=1}^3 C_i.$ 
\end{corollary}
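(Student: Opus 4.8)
The plan is to obtain the corollary as an immediate corollary of the two preceding lemmas. Fix $n \in \mathbb{N}_{\geq 3}$ and let $f$ be a well-covered weighting of $S_n$. By Lemma~\ref{CONSt}, $f$ takes a single constant value, say $c$, on all of $V(S_n)-\bigcup_{i=1}^3 C_i$. So the entire task reduces to showing that $c = 0$.

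To pin down $c$, I would exhibit one vertex of $V(S_n)-\bigcup_{i=1}^3 C_i$ to which Lemma~\ref{ZEROO} applies and forces the value $0$. Choose any side $P_n$ of $S_n$. For $n \geq 3$ this side is a path with at least five edges (hence at least six vertices), and the only vertices of $P_n$ that lie in a simplicial clique of $S_n$ are the ones at its two corner ends; consequently $P_n$ contains an interior vertex $v$ belonging to no simplicial clique of $S_n$, i.e. $v \in V(S_n)-\bigcup_{i=1}^3 C_i$. By Lemma~\ref{ZEROO}, since $v$ lies on a side but is not in a simplicial clique, $f(v) = 0$. Hence $c = f(v) = 0$, and feeding this back into Lemma~\ref{CONSt} gives $f(v) = 0$ for every $v \in V(S_n)-\bigcup_{i=1}^3 C_i$, which is the assertion.

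The only step that needs a word of justification is the existence of such an interior side vertex $v$; this is guaranteed by the size estimate on sides recorded earlier in the section (a side is a path of length at least $5$ when $n \geq 3$, so it has vertices strictly between its two end cliques). Beyond this bit of bookkeeping there is essentially no obstacle, since the substantive content — the constancy of $f$ away from the simplicial cliques, and the vanishing of $f$ along the sides — is already established in Lemmas~\ref{CONSt} and~\ref{ZEROO}.
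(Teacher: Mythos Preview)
Your proposal is correct and mirrors the paper's own proof almost verbatim: pick a side vertex $v$ outside the simplicial cliques, use Lemma~\ref{ZEROO} to get $f(v)=0$, and then invoke Lemma~\ref{CONSt} to propagate this zero to all of $V(S_n)-\bigcup_{i=1}^3 C_i$. One tiny wording caveat: the paper's ``length at least $5$'' means at least five \emph{vertices} (for $S_3$ a side has five vertices and four edges), so your ``at least five edges (hence at least six vertices)'' overstates it slightly, though the needed interior vertex still exists and the argument is unaffected.
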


\begin{proof}
Let $P_n$ be a side of $S_n$ and suppose that $v \in V(P_n)$ but that $v \notin C_i,$ for all $i \in \mathbb{N}_{\leq 3}.$ Let $f$ be any well-covered weighting of $S_n.$ By Lemma \ref{ZEROO}, $f(v)=0,$ and by Lemma \ref{CONSt}, it follows that $f$ is zero on $V(S_n)-\bigcup_{i=1}^3 C_i.$
\end{proof}

In conclusion, if $n \in \mathbb{N}_{\geq 3},$ any well-covered weighting of $S_n$ is a linear combination of at most three linearly independent functions that assign a distinct, non-zero scalar to each simplicial clique, and zero to all other vertices of $S_n.$ Then, $wcdim(S_n) \leq 3$ and by Remark \ref{LBOUND}, we obtain the final theorem for this section.   

\begin{theorem}
For any $n \in \mathbb{N}_{\geq 3},$ $wcdim(S_n)=3.$
\end{theorem}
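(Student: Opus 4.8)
The plan is to upgrade the prose argument preceding the statement into a short assembly of the lemmas just proved. Fix $n \ge 3$ and let $f$ be an arbitrary well-covered weighting of $S_n$. The Corollary to Lemma \ref{ZEROO} already tells us that $f$ vanishes on $V(S_n)-\bigcup_{i=1}^{3}C_i$, so for the upper bound all that remains is to see that $f$ is constant on each of the three corner triangles $C_1,C_2,C_3$.

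To get that, I would observe that for $n\ge 3$ each $C_i$ is the corner triangle $\{a_i,b_i,c_i\}$, where $a_i$ is the simplicial (degree-$2$) corner vertex and $b_i,c_i$ are the two vertices adjacent to $a_i$; by the recursive construction of $S_n$, $b_i$ is the second vertex of one of the two sides of $S_n$ issuing from $a_i$ and $c_i$ is the second vertex of the other. Applying Lemma \ref{ZEROO} to each of these two sides --- each a path on $\ge 5$ vertices whose end-edge at the $a_i$-corner, namely $a_ib_i$ respectively $a_ic_i$, is a simplicial clique of that path --- gives $f(a_i)=f(b_i)$ and $f(a_i)=f(c_i)$. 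Hence $f$ is constant on $C_i$, and, since $i$ was arbitrary, $f$ is determined by the three scalars $f|_{C_1},f|_{C_2},f|_{C_3}$. Thus the well-covered space $\mathcal{V}$ embeds into a $3$-dimensional space, so $wcdim(S_n)\le 3$; together with $wcdim(S_n)\ge 3$ from Remark \ref{LBOUND}, this yields $wcdim(S_n)=3$.

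The one place calling for care is the identification of $C_i$ with ``corner vertex plus the two second-vertices of the adjacent sides,'' which must be read off cleanly from the recursion defining $S_n$; once that is pinned down, everything else is bookkeeping on top of Lemma \ref{ZEROO}, its Corollary, and Remark \ref{LBOUND}. Note in particular that we never need to verify that a function which is a nonzero constant on one $C_i$ and zero elsewhere is itself a well-covered weighting: the lower bound is delivered separately by $wcdim(S_n)\ge sc(S_n)=3$.
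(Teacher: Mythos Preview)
Your proposal is correct and follows essentially the same route as the paper: combine the Corollary (vanishing off $\bigcup_{i} C_i$), Lemma~\ref{ZEROO} (constancy on each $C_i$, obtained via the path argument on the sides meeting at each corner), and Remark~\ref{LBOUND} for the matching lower bound. The only difference is cosmetic---you spell out explicitly how constancy on all three vertices of $C_i$ follows from applying Lemma~\ref{ZEROO} to the \emph{two} sides through $a_i$, whereas the paper folds this into the clause ``where $f$ is constant'' in the statement of Lemma~\ref{ZEROO}.
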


\section*{acknowledgements}
The author gratefully acknowledges support from NSF Grant \#DMS-1156273, the California State University Fresno Mathematics REU program, especially Oscar Vega and Tam\'as Forg\'acs, the Mellon Mays Undergraduate Fellowship, and the editor and referee for their kind suggestions. 


\end{document}